\newtheorem{theorem}{Theorem}
\newtheorem*{theorem*}{Theorem}
\newtheorem{lemma}{Lemma}
\newcommand\hf{\mathsf{HF}}
\newcommand\Hd{{\em Hamiltonian Descent}\xspace}
\newcommand{\p}[2]{\xi_{#1,#2}}
\newcommand{\q}[2]{\zeta_{#1,#2}}
\def\reals{\mathbb{R}}
\def\HF{{\em Hamiltonian Flow}\xspace}
\def\E{\mathbb{E}}
\title{Frictionless Hamiltonian Descent and Coordinate Hamiltonian Descent for Strongly Convex Quadratic Problems}
\author{Jun-Kun Wang\footnote{
            University of California San Diego, 
            \texttt{jkw005@ucsd.edu}}
            }
\date{}
\begin{document}
\maketitle

\begin{abstract}
We propose an optimization algorithm called Frictionless Hamiltonian Descent, which is a direct counterpart of classical Hamiltonian Monte Carlo in sampling. 
We analyze Frictionless Hamiltonian Descent for strongly convex quadratic functions 
and show that the method has a non-trivial accelerated rate as that of Heavy Ball flow.
We also propose Frictionless Coordinate Hamiltonian Descent and its parallelizable variant, which turns out to encapsulate the classical Gauss-Seidel method, Successive Over-relaxation, Jacobi method, and more, for solving a linear system of equations. The result not only offers a new perspective on these existing algorithms but also leads to a broader class of update schemes that guarantee the convergence.
\end{abstract}

\section{Introduction}

The laws of classical mechanics, which describe the dynamics of moving bodies in physical space,
have been a subject of great practical interest and inspired centuries of deep mathematical study. 
In particular, the perspective that describes the dynamics of a particle as a Hamiltonian Flow in phase space has found useful applications in designing algorithms such as Hamiltonian Monte Carlo \citep{Duane87,neal2011mcmc,wang2022accelerating}, a popular sampling algorithm in statistics and Bayesian machine learning \citep{HG14,salvatier2016probabilistic,carpenter2017stan}. In this work, we ask whether there exists a natural counterpart of Hamiltonian Monte Carlo in optimization. More specifically, we consider 
designing algorithms based on the Hamiltonian Flow for efficiently solving an optimization problem, i.e., solving
$\min_{x \in \reals^d} f(x).$

In classical mechanics, a particle is oftentimes described by its position $x \in \reals^d$ and its velocity $v \in \reals^d$ with 
the Hamiltonian 
consisting of its potential energy and its kinetic energy. 
In this paper, we define the Hamiltonian $H(x,v)$ as 
$H(x,v):= f(x) + \frac{1}{2} \| v \|^2_2,$
where the potential energy $f(\cdot)$ is the underlying objective function that we aim to minimize and $\frac{1}{2} \| v \|^2_2$ is the kinetic energy.
Hamiltonian dynamics is one of the fundamental tools in classical mechanics for describing many physical phenomena. The dynamics of the \emph{Hamiltonian Flow} is governed by a system of differential equations:
\begin{equation} \label{flow}
\frac{dx}{dt} = \frac{\partial H}{\partial v} = v  \quad \text{ and } \quad \frac{dv}{dt} = - \frac{\partial H}{\partial x } = - \nabla f(x).
\end{equation}
Given the initial position $x_0$ and velocity $v_0$ at $t=0$, the solution to \eqref{flow}  describes the position $x_{\eta}$ and the velocity $v_{\eta}$ of a particle 
at the integration time $t=\eta$. Throughout this paper, we express the execution of the Hamiltonian Flow for a duration $\eta$ from the initial condition $(x_0,v_0)$ as
$(x_{\eta},v_{\eta}) = \hf_{\eta}(x_0, v_0)$. 

\begin{algorithm}[t]
\begin{algorithmic}[1]
\caption{\textsc{Frictionless Hamiltonian Descent (Frictionless-HD)}
} \label{alg:opt}{}
\STATE \textbf{Input:} an initial point $x_{1} \in \reals^{d}$, number of iterations $K$, a scheme of integration time $\{\eta_k\}$.
\FOR{$k=1$ to $K$}
\STATE $(x_{k+1},v_{k+1}) = \hf_{\eta_k}(x_{k}, 0)$.
\ENDFOR
\STATE \textbf{Output:} $x_{K+1}$.
\end{algorithmic}
\end{algorithm}

Equipped with the Hamiltonian Flow \eqref{flow}, we propose \textit{Frictionless-}\Hd (Algorithm~\ref{alg:opt}), where
at each iteration $k$, a Hamiltonian Flow for a duration $\eta_k$ is conducted from the current update $x_k$ with the zero initial velocity. 
It turns out that Frictionless Hamiltonian Descent is a ``descent'' method. 
The property of being a descent method can be derived from the fact that the Hamiltonian is conserved along the Hamiltonian Flow, a classical result in Hamiltonian mechanics,
e.g., \cite{Arnold1989,greiner2003classical,neal2011mcmc}. 
\begin{mdframed}[backgroundcolor=black!10,rightline=false,leftline=false,topline=false,bottomline=false]
\begin{lemma} \label{lem:conserve}
The time derivative of the Hamiltonian satisfies 
$\frac{dH}{dt} = 0$ along the Hamiltonian Flow \eqref{flow}.
\end{lemma}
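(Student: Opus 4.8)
The plan is to compute the total time derivative of $H(x_t,v_t)$ directly via the multivariate chain rule and then substitute the flow equations \eqref{flow} to observe an exact cancellation. Since $H(x,v) = f(x) + \frac{1}{2}\|v\|_2^2$ depends on time only through the trajectory $t \mapsto (x_t,v_t)$, I would first write
\begin{equation*}
\frac{dH}{dt} = \left\langle \frac{\partial H}{\partial x}, \frac{dx}{dt} \right\rangle + \left\langle \frac{\partial H}{\partial v}, \frac{dv}{dt} \right\rangle,
\end{equation*}
which is just the chain rule applied to a composition of the smooth function $H$ with the curve $(x_t,v_t)$. This step presumes enough differentiability of $f$ that $\nabla f$ exists and the flow is well-defined; I would state that $f$ is differentiable so that all quantities appearing are meaningful.

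Next I would identify the two partial gradients of $H$: since $f$ depends only on $x$ and the kinetic term $\frac{1}{2}\|v\|_2^2$ depends only on $v$, we have $\frac{\partial H}{\partial x} = \nabla f(x)$ and $\frac{\partial H}{\partial v} = v$. Substituting the Hamiltonian Flow equations $\frac{dx}{dt} = v$ and $\frac{dv}{dt} = -\nabla f(x)$ from \eqref{flow} then yields
\begin{equation*}
\frac{dH}{dt} = \langle \nabla f(x_t), v_t \rangle + \langle v_t, -\nabla f(x_t) \rangle = 0,
\end{equation*}
where the two inner products cancel by symmetry of the standard inner product. This is the crux of the energy-conservation phenomenon: the kinetic-energy gradient feeds into the position update while the potential-energy gradient feeds into the velocity update with an opposite sign, so the cross terms annihilate each other.

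Finally, having shown $\frac{dH}{dt} \equiv 0$ along the flow, I would conclude that $t \mapsto H(x_t,v_t)$ is constant, and therefore $H(x_t,v_t) = H(x_0,v_0)$ for all $t \geq 0$, which is exactly the claimed energy conservation \eqref{eng}.

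Honestly, I do not anticipate a genuine obstacle here: the result is a direct computation, and the only thing to be careful about is the bookkeeping of which variable each partial derivative acts on and the sign in the velocity equation, since a sign error would spuriously double the terms rather than cancel them. The single substantive observation is the antisymmetric pairing of $\nabla f$ across the two flow equations that forces the cancellation.
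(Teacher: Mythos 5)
Your proposal is correct and follows essentially the same route as the paper's proof: apply the chain rule to $H(x_t,v_t)$, identify $\frac{\partial H}{\partial x}=\nabla f(x)$ and $\frac{\partial H}{\partial v}=v$, substitute the flow equations, and observe the cancellation $\langle \nabla f(x),v\rangle + \langle v,-\nabla f(x)\rangle = 0$. No substantive difference.
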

\end{mdframed}
\begin{proof}
By the chain rule, 
$\frac{dH}{dt} = \left \langle \frac{\partial H}{\partial x} , \frac{ \partial x}{ \partial t}  \right \rangle 
+
 \left \langle \frac{\partial H}{\partial v} , \frac{ \partial v}{ \partial t}  \right \rangle
  = \left \langle \nabla f(x), v \rangle + \langle v, -\nabla f(x) \right \rangle = 0 \notag.$
\end{proof}
What Lemma~\ref{lem:conserve} implies is that we have a nice property: 
\begin{equation*} \label{eng}
\textbf{(Energy Conservation): } 
H(x_t,v_t) = H(x_0,v_0), \forall t \geq 0,
\end{equation*}
where $(x_{t},v_{t}) = \hf_{t}(x_0, v_0)$. 
In other words, the value of the Hamiltonian at any time $t$ is the same as its initial value along the Hamiltonian Flow. 
Based on the conservation, 
Frictionless-HD (Algorithm~\ref{alg:opt}) has 
\begin{equation*} \label{descent2}
f(x_{k+1}) + \frac{1}{2} \|v_{k+1}\|^2_2 = H(x_{k+1},v_{k+1}) = H(x_k, 0) = f(x_k)
\end{equation*}
where we used Lemma~\ref{lem:conserve} for the second equality.
Since the kinetic energy is non-negative, the above equality implies that
Frictionless Hamiltonian Descent decreases the function value as long as the kinetic energy is non-zero.

Frictionless Hamiltonian Descent can be considered as the direct counterpart of Hamiltonian Monte Carlo (HMC).  
The only difference between their updates is that in HMC, the initial velocity for running a Hamiltonian Flow is randomly sampled from the normal distribution, while in Frictionless Hamiltonian Descent, the initial velocity is set to be zero before running the flow at each iteration. 
The reset of the velocity (and hence resetting the kinetic energy to zero) is a critical operation for Frictionless Hamiltonian Descent to be an optimization method. If the value of the potential energy $f(\cdot)$ is at a minimum but the kinetic energy is not zero due to energy conservation, then the update will keep moving and may not be able to converge.
It is worth mentioning the restart schemes such as \cite{alamo2022restart, necoara2019linear} in the literature, which are designed to transform sublinear-rate algorithms into accelerated linear-rate methods. These schemes typically leverage specific properties of the objective function, such as the quadratic growth condition, by restarting the underlying sublinear-rate optimization algorithm. The core idea is to exploit the fast progress often observed during the initial iterations of a sublinear-rate optimization method (see, e.g., Section 6.1 in \cite{DST21} for an illustration). Therefore, the motivations and mechanisms behind those schemes differ from Frictionless Hamiltonian Descent.

We note that the use of toolkits from Hamiltonian Mechanics in designing and analyzing various optimization algorithms has been quite popular in optimization literature, e.g., \cite{suBoydCandes2014_differential,krichene2015accelerated,wibisono2016variational,HL17,SRBA17,betancourt2018symplectic,AttouchChbaniPeypouquetRedont2018_fast,BotCsetnek2018_convergence,
maddison2018hamiltonian,FrancaRobinsonVidal2018_admm,muehlebach2019dynamical,zhang2018direct,DO19,o2019hamiltonian,francca2020conformal,
diakonikolas2021generalized,FrancaJordanVidal2021_dissipative,ZhangOrvietoDaneshmand2021_rethinking,AGV21,DST21,
wilson2016lyapunov,
muehlebach2021optimization,even2021continuized,shi2022understanding,
SuhRohRyu2022_continuoustime,de2022born,attouch2022first,
dixit2023accelerated,
WW23a,akande2024momentum}. 
However, what sets our work apart from the aforementioned works is that we consider a pure Hamiltonian dynamical system for optimization between each reset, which, to the best of our knowledge, is rare in the field of optimization \citep{teel2019first,diakonikolas2021generalized}. Specifically, most existing works focus on friction-based optimization methods when leveraging techniques or principles from mechanics in designing and analyzing  algorithms. 
For example, \cite{maddison2018hamiltonian} consider 
a \emph{friction-based} Hamiltonian optimization method:
\begin{equation} \label{damping}
\frac{dx}{dt} = \nabla \phi(v)  \quad \text{ and } \quad \frac{dv}{dt} = - \nabla f(x) - \gamma v,
\end{equation}
where $\gamma > 0$ is the damping parameter that induces friction throughout the optimization process. The dynamic \eqref{damping} reduces to the continuous-time Heavy Ball dynamic \citep{P64} when $\nabla \phi(v)=v$. 

Indeed, a common belief in optimization is that to decrease a function value quickly, dissipating energy via friction along the way might be necessary. The inclusion of the friction term into the optimization dynamics has been the cornerstone of designing accelerated methods in optimization \citep{suBoydCandes2014_differential,krichene2015accelerated,wibisono2016variational,HL17,SRBA17,maddison2018hamiltonian,
muehlebach2019dynamical,shi2022understanding}.  
However, surprisingly, on strongly convex quadratic problems, which is one of the most canonical optimization problems that has consistently served as a great entry point for understanding optimization behavior of an algorithm \citep{NB15,SP20,AGZ21,paquette2021sgd,kelner2022big,goujaud2022super}, we find that the \emph{Frictionless} Hamiltonian Descent (Algorithm~\ref{alg:opt}) achieves an accelerated rate.  
Even more, 
when comparing to the Polyak's Heavy Ball flow
for strongly convex functions \citep{P64,WJR21,sanz2021connections,
aujol2023convergence,kassing2024polyak,okamura2026heavy} with resets, i.e., the flow governed by
\begin{equation}   \label{HBODE}
\ddot{x} + 2 \sqrt{m} \dot{x} + \nabla f(x) = 0,  
\end{equation}  
where $m$ is the strong convexity constant of the underlying function $f(\cdot)$, we show that the \emph{Frictionless} Hamiltonian Descent achieves the same accelerated rate.
This result is unexpected, since it is generally believed that the incorporation of the friction term \(2 \sqrt{m} \dot{x}\) helps  quickly decrease the objective value and converge faster than standard gradient descent. Yet, our finding is that running a pure Hamiltonian Flow \eqref{flow} with resets (Algorithm~\ref{alg:opt}) can be competitive with Heavy Ball flow, which incorporates the friction. 

To summarize, our contributions include: 
\begin{itemize}
\item We propose Frictionless Hamiltonian Descent (Frictionless-HD) and establish its descent property by showing that the function value never increases along the Hamiltonian Flow when its initial kinetic energy is specified to be zero. We analyze Frictionless-HD
for strongly convex quadratic functions 
and demonstrate that Frictionless-HD enjoys a non-trivial accelerated rate as that of Heavy Ball ODE for strongly convex quadratic problems.
\item We further propose Frictionless Coordinate Hamiltonian Descent (Frictionless-CHD). We find that for solving a system of linear equations, classical algorithms such as Gauss-Seidel and Successive Over-relaxation are instances of Frictionless-CHD with different schemes of integration time. This new perspective not only provides a simple proof of the convergence of Successive Over-relaxation but also yields a broad class of updates that are guaranteed for the convergence. 
\item Furthermore, we introduce Parallel Frictionless Coordinate Hamiltonian Descent, which, under different choices of the integration time, becomes the Jacobi method and the weighted Jacobi iteration for solving a linear system of equations. The perspective of Hamiltonian dynamics also helps identify new parallelizable variants and conditions that guarantee convergence.
\end{itemize}

\noindent
\textbf{More related works.} 
To the best of our knowledge, only a couple of previous works have considered optimization based on the principles of Frictionless Hamiltonian Flow with the corresponding dynamics \eqref{flow}.
\citet{teel2019first} consider optimization based on the Hamiltonian dynamics and propose resetting the velocity of Hamiltonian Flow to zero
whenever the update $x$ is about to exit a set defined as $\{ x \in \reals^d, v \in \reals^d: \langle \nabla f(x), v \rangle \leq 0 \, \& \, \| v \|^2 \geq \| \nabla f(x) \|^2 / L \}$ or when a timer has timed out, where $L$ is the smoothness constant of $f(\cdot)$.
Although also built upon the Hamiltonian Flow, their algorithm differs from ours.
A uniform global stability convergence result is given in \cite{teel2019first} for minimizing smooth strongly convex functions using their proposed algorithm. 
\citet{diakonikolas2021generalized} nicely show that along the Hamiltonian Flow,
the average gradient norm, i.e.,
$\| \frac{1}{t} \int_0^t \nabla f(x_{\tau}) d\tau \|$, decreases at an $O(1/t)$ rate.
Moreover, they consider a time-varying Hamiltonian $H(x,v,\tau) = h(\tau) f(x/\tau) + \psi^*(v)$, where $h(\tau)$ is a positive function of a scaled time $\tau$ and $\psi(\cdot)$ is a strongly convex differentiable function. They show that a broad class of momentum methods
in Euclidean and non-Euclidean spaces can be produced from the equations of motions, which include
classical Nesterov's accelerated method \citep{nesterov1983method,nesterov2018lectures,wibisono2016variational,wang2018acceleration} and Polyak's Heavy Ball \citep{P64} as special cases.
We also note that \citet{de2022born} consider dynamics motivated from the mechanics and propose an algorithm which requires randomly specifying a non-zero velocity at each step to restore a notion of energy in their proposed algorithm. 
After the preprint of this work appeared on arXiv, a recent work \citep{fu2025hamiltonian} proposed a randomized integration time for Frictionless Hamiltonian Descent, in which the integration time at each iteration is sampled from an exponential distribution. They further nicely proposed a discretization scheme to simulate the Hamiltonian Flow and derived a discrete-time optimization algorithm with corresponding iteration complexity guarantees for general strongly convex and smooth functions and smooth non-strongly convex functions.

\section{Theoretical analysis of Frictionless Hamiltonian Descent for strongly convex quadratic problems} \label{sec:Hami}

We continue by analyzing Frictionless-HD (Algorithm~\ref{alg:opt}) for solving strongly convex quadratic functions, i.e., solving
\begin{equation} \label{quadratic}
\min_{x \in \reals^d} f(x), \quad \text{where } f(x):=\frac{1}{2} x^\top A x - b^\top x,
\end{equation}
where $A \succ 0$ is a positive definite matrix. 
To get the ball rolling, in the following, we denote
 $\cos\left( Q \right)$ the trigonometric function of a square matrix $Q \in \reals^{d\times d}$, 
which is defined as 
$\cos\left( Q \right)  := \sum_{i\geq0} \frac{(-1)^i  Q^{2i}}{ (2i)! }$.
Furthermore, we denote $x_*:=A^{-1}b$ the optimal point of $\eqref{quadratic}$. 
\begin{mdframed}[backgroundcolor=black!10,rightline=false,leftline=false,topline=false,bottomline=false]
\begin{lemma} \label{app:1}
Let $f(x) :=\frac{1}{2} x^\top A x - b^\top x $.
Applying the \HF \eqref{flow} to $f(\cdot)$ with an initial position $x_0 \in \reals^d$ and an initial zero velocity $0 \in \reals^d$.
Then, $(x_{\eta},v_{\eta}) = \hf_{\eta}(x_0,0)$ has a closed-form expression, i.e.,
$  x_{\eta} =  A^{-1} b  + \cos( \eta \sqrt{A} ) \left( x_0 - A^{-1} b  \right)$
and 
$  v_{\eta} = -\sqrt{A} \sin (\eta \sqrt{A} ) \left( x_0 - A^{-1} b  \right), $
where $\sqrt{A}$ represents the matrix square root of $A$, i.e., $\sqrt{A} \sqrt{A} = A$.
\end{lemma}
\end{mdframed}

\begin{proof}
We have
$\dot{x} = v$ and
$\dot{v} = - \left( A x - b \right).$
In the following, we denote 
$y_t := x_t - A^{-1} b.$
Then, we have
$\dot{y} = \dot{x} = v$ and $\dot{v} = - A y$,
which can be re-written as
\begin{equation*}
 \begin{bmatrix} \dot y  \\  \dot v  \end{bmatrix} = \begin{bmatrix} 0 & I_d \\ - A & 0 \end{bmatrix}
 \begin{bmatrix}  y  \\  v  \end{bmatrix}. 
\end{equation*}
Then, 
at integration time $t=\eta$, the solution to the differential equation is 
$\begin{bmatrix}  y_{\eta}  \\  v_{\eta}  \end{bmatrix}
= \exp \left( \begin{bmatrix} 0 & \eta I_d \\ - \eta A & 0 \end{bmatrix}  \right) \begin{bmatrix} y_0 \\ v_0 \end{bmatrix}.$
By noticing that
\begin{align}
\exp \left( \begin{bmatrix} 0 & \eta I_d \\ - \eta A & 0 \end{bmatrix}  \right) 
& =
\begin{bmatrix}
\cos ( \eta \sqrt{A} ) & \frac{1}{\sqrt{A}} \sin (\eta \sqrt{A}) \notag
\\ 
-\sqrt{A} \sin (\eta \sqrt{A} ) & \cos ( \eta \sqrt{A})
\end{bmatrix},
\end{align}
we deduce that
$ y_{\eta} =  \cos( \eta \sqrt{A} ) y_0 + \frac{1}{\sqrt{A}} \sin (\eta \sqrt{A}) v_0 $ and
$ v_{\eta} = -\sqrt{A} \sin (\eta \sqrt{A} ) y_0 +  \cos ( \eta \sqrt{A}) v_0$.
Finally, by using the initial condition $v_0=0$ and the expression of $y_{\eta}$,  
we obtain the result. 

\end{proof}

It is worth comparing the dynamic above with the Heavy Ball flow for the same strongly convex quadratic problem. Let $z_t := x_t^{\mathrm{HB}} - x_*$ be the distance at time $t$, where $x_t^{\mathrm{HB}}$ is the position of the Heavy Ball flow governed by \eqref{HBODE} and $x_*$ is the optimal point.
Then, the dynamics of the distance under the Heavy Ball flow for solving \eqref{quadratic} are given by:
\begin{equation} \label{HB}
z_t
=
e^{-\sqrt m t}
\Big(
\cos(\sqrt{A-mI}\,t)\,  z_0
+
(\sqrt{A-mI})^{-1}
\sin(\sqrt{A-mI}\,t)
\sqrt{ m}  z_0 
\Big),
\end{equation}
when the initial velocity of the Heavy Ball flow is zero, i.e., $\dot{x}_0=0$,
where we note that 
$\sin(Q) := \sum_{k=0}^{\infty} \frac{(-1)^k Q^{2k+1}}{(2k+1)!}$
for any square matrix $Q$ and that
$Q^{-1} \sin(Q) := \sum_{k=0}^{\infty} \frac{(-1)^k Q^{2k}}{(2k+1)!}.$

\begin{mdframed}[backgroundcolor=black!10,rightline=false,leftline=false,topline=false,bottomline=false]
\begin{lemma} \label{lem:per_update}
 For the strongly convex quadratic problems \eqref{quadratic}, 
 the update of Frictionless-HD  
satisfies
 $x_{k+1} - x_* = \cos\left( \eta_k \sqrt{ A} \right) \left( x_k - x_* \right)$.
\end{lemma}
\end{mdframed}

\begin{proof}
The proof is 
simply an application of Lemma~\ref{app:1},
where we let $x_{\eta} \gets x_{k+1}$, $x_0 \gets x_k$, and $\eta \gets \eta_k$.
\end{proof}

We remark that, as with the continuous-time Heavy Ball flow in \eqref{HB}, computing the update requires computing $x_* = A^{-1}b$ and the matrix exponential; hence, it is not particularly practical from a computational perspective. However, the continuous-time analysis may serve as a starting point to gain insight into the behavior of its discrete-time counterpart. 
We further note that the dynamic of Frictionless Hamiltonian Descent for strongly convex quadratic functions can be viewed as an optimization counterpart to Hamiltonian Monte Carlo for sampling from a Gaussian distribution \citep{wang2022accelerating}. Later, in Section~\ref{sec:Co-Hami}, we propose Frictionless Coordinate Hamiltonian Descent and its parallel variant, which recover several classical updates and generate new update schemes---all of which feature low per-iteration costs.

While the update is specific to the quadratic problems, 
Lemma~\ref{lem:per_update} sheds some light on why it might not be a good idea for executing the Hamiltonian Flow for an infinite integration time (i.e., not let $\eta \to \infty)$---the dynamic in Lemma~\ref{lem:per_update} suggests that it periodically returns to the initial point if we do not reset the velocity. 
By Lemma~\ref{lem:per_update}, the distance between the update $x_{K+1}$ by Frictionless-HD and $x_*$ can be written as follows:
\begin{mdframed}[backgroundcolor=black!10,rightline=false,leftline=false,topline=false,bottomline=false]
\begin{lemma} \label{dym:quadratic}
Frictionless-HD (Algorithm~\ref{alg:opt}) for the strongly convex quadratic problems \eqref{quadratic} satisfies
\begin{equation*} \label{HD:dym}
x_{K+1} - x_* = \left( \Pi_{k=1}^K \cos\left( \eta_k \sqrt{A} \right) \right) (x_1 - x_*).
\end{equation*}
\end{lemma}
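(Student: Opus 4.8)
The plan is to compute a single step of the Hamiltonian Flow in closed form and then iterate. First I would center the problem at the minimizer. Since $f(x) = \frac{1}{2}x^\top A x - b^\top x$ is quadratic, $\nabla f(x) = Ax - b$, and the optimum $x_*$ is characterized by $\nabla f(x_*) = Ax_* - b = 0$, i.e. $x_* = A^{-1}b$ (which exists as $A \succ 0$). Introducing the centered variable $y_t := x_t - x_*$, we have $\nabla f(x_t) = A(x_t - x_*) = A y_t$, so the flow equations \eqref{flow} become the linear second-order system $\dot{y}_t = v_t$ and $\dot{v}_t = -A y_t$, equivalently $\ddot{y}_t = -A y_t$. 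This is a matrix harmonic oscillator, and the whole lemma reduces to solving it.

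Next I would solve this system explicitly. The natural candidate is
\begin{equation*}
y_t = \cos\!\left(t\sqrt{A}\right) y_0 + \left(\sqrt{A}\right)^{-1} \sin\!\left(t\sqrt{A}\right) v_0,
\end{equation*}
where the matrix trigonometric functions are defined by the power series as in the statement. I would verify this is the solution in one of two ways. Directly: differentiating the series term by term yields $\frac{d}{dt}\cos(t\sqrt{A}) = -\sqrt{A}\sin(t\sqrt{A})$ and $\frac{d}{dt}\sin(t\sqrt{A}) = \sqrt{A}\cos(t\sqrt{A})$, from which $\dot{y}_t = v_t$ and $\ddot{y}_t = -A y_t$ follow, and the initial conditions $y_0, v_0$ are matched; uniqueness of solutions to linear ODEs then confirms it. Alternatively, and more cleanly, I would diagonalize $A = U\Lambda U^\top$ with $\Lambda = \mathrm{diag}(\lambda_1,\dots,\lambda_d) \succ 0$; in the eigenbasis the system decouples into scalar oscillators $\ddot{\tilde{y}}_i = -\lambda_i \tilde{y}_i$, each with the elementary solution $\tilde{y}_i(t) = \cos(\sqrt{\lambda_i}\,t)\,\tilde{y}_i(0) + \frac{\sin(\sqrt{\lambda_i}\,t)}{\sqrt{\lambda_i}}\,\tilde{v}_i(0)$, and reassembling through $U$ recovers the matrix-function form above. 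The diagonalization route has the advantage of making $\sqrt{A}$, $(\sqrt{A})^{-1}$, and the matrix $\cos/\sin$ manifestly well-defined, sidestepping any series-convergence bookkeeping.

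Finally I would specialize to HD and iterate. In Algorithm~\ref{alg:opt} the initial velocity at each iteration is zero, so setting $v_0 = 0$ annihilates the sine term, and one step of the flow for duration $\eta_k$ starting from $x_k$ gives $x_{k+1} - x_* = \cos(\eta_k\sqrt{A})(x_k - x_*)$. Chaining this recursion for $k = 1,\dots,K$ produces the product $\left(\Pi_{k=1}^K \cos(\eta_k\sqrt{A})\right)(x_1 - x_*)$, which is exactly \eqref{HD:dym}. I would also note that since every factor $\cos(\eta_k\sqrt{A})$ is a function of $A$, the factors mutually commute, so the ordering of the product is immaterial.

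I expect the only genuinely nontrivial step to be justifying the closed-form solution of the matrix ODE, specifically the well-definedness and term-by-term differentiation of the matrix trigonometric functions; reducing to the scalar oscillators via the spectral decomposition of $A$ dispenses with this obstacle entirely, since everything then rests on the standard scalar identity for $\ddot{z} = -\lambda z$.
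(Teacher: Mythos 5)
Your proposal is correct and follows essentially the same route as the paper: center at $x_* = A^{-1}b$, solve the resulting linear system $\dot y = v$, $\dot v = -Ay$ in closed form to get the one-step map $x_{k+1} - x_* = \cos(\eta_k\sqrt{A})(x_k - x_*)$ (the sine term vanishing because $v_0 = 0$), and iterate. The only cosmetic difference is that the paper verifies the one-step formula by expanding the block matrix exponential $\exp\bigl(\begin{smallmatrix}0 & \eta I \\ -\eta A & 0\end{smallmatrix}\bigr)$ as a power series to read off the $\cos/\sin$ blocks, whereas you verify the candidate by term-by-term differentiation or by diagonalizing $A$ into scalar oscillators; both are standard and equally valid.
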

\end{mdframed}

From the lemma,  we have
$\| x_{K+1} - x_* \|_2 \leq  
\left \|  \Pi_{k=1}^K \cos\left( \eta_k \sqrt{A} \right) \right \|_2
\| x_1 - x_* \|_2$. Denote $m:= \lambda_{\min}(A)$ the smallest eigenvalue and $L:=\lambda_{\max}(A)$ the largest eigenvalue of $A$. 
To further get an insight and an upper bound of the convergence rate, in the following, we define
\begin{equation*}  \label{myroots}
r_{k}^{(K)} := \frac{L+m}{2} - \frac{L-m}{2} \text{cos}\left( \frac{(k-\frac{1}{2}) \pi}{K}  \right), 
\end{equation*}
where $k = 1,2,\dots, K$.
To continue, 
let us consider a scaled-and-shifted Chebyshev polynomial, defined as
$\bar{\Phi}_K(\lambda) := \frac{\Phi_K\big(h(\lambda)\big)}{\Phi_K\big(h(0)\big)}$,
where 
$h(\lambda) := \frac{L+m - 2 \lambda}{L-m}$
and $\Phi_K(\cdot)$ is the degree-$K$ Chebyshev polynomial of the first kind.
It is known that $\{r_{k}^{(K)}\}_{k=1}^K$ are the roots of 
the scaled-and-shifted Chebyshev polynomial
$\bar{\Phi}_K(\lambda)$, see e.g., \cite{AGZ21,wang2022accelerating}.
Therefore, $\bar{\Phi}_K(\lambda)$ has an equivalent expression:
\begin{equation*}
\textstyle
\bar{\Phi}_K(\lambda) = \Pi_{k=1}^K \left(1 - \frac{\lambda}{ r_{k}^{(K)} }\right),
\end{equation*}
where $\lambda \in [m, L]$.
Furthermore, denote $P_K(A)$ a $K$-degree polynomial of $A$, 
it is known that $\bar{\Phi}_K$ is the optimal $K$-degree polynomial \citep{vishnoi2013lx,DST21}, i.e.,  
\begin{equation*}
\bar{\Phi}_K = \underset{ P \in P_K, P(0)=1 }{\arg\min}  \underset{\lambda \in [m,L] }{ \max} \left| P(\lambda)  \right|.
\end{equation*}
In the following, we use $\sigma(k;K)$ to denote the $k_{\mathrm{th}}$ element of the array $[1,2,\dots, K]$ \emph{after} any permutation $\sigma$.
\begin{mdframed}[backgroundcolor=black!10,rightline=false,leftline=false,topline=false,bottomline=false]
\begin{lemma}~\label{lem:key} 
(Lemma 4 in \cite{wang2022accelerating}) 
Denote
$| P_{K}^{\mathrm{Cos}}(\lambda) | :=\left| \Pi_{k=1}^K \mathrm{cos}\left( \frac{\pi}{2} \sqrt{ \frac{\lambda}{ r_{\sigma(k;K)}^{(K)} }  }  \right)   \right| $.
Consider any $\lambda \in [m, L]$.
Then, for any positive integer $K$, we have 
\begin{equation*} \label{sta}
| P_{K}^{\mathrm{Cos}}(\lambda) | \leq 
\left| \bar{ \Phi }_K(\lambda)  \right|.
\end{equation*}
\end{lemma}
\end{mdframed}

\begin{mdframed}[backgroundcolor=black!10,rightline=false,leftline=false,topline=false,bottomline=false]
\begin{lemma} \label{lem:commute}
The eigenvalues 
of the matrix
$\Pi_{k=1}^K \cos\left(\eta_k \sqrt{A} \right)$
are $\mu_j := \Pi_{k=1}^K \cos\left( \eta_k \sqrt{\lambda_j}  \right)$, for every $j \in [d]$,
where $\lambda_1,\lambda_2,\dots,\lambda_d$ are the eigenvalues of $A$.
\end{lemma}
\end{mdframed}

\begin{proof}
We first show that for all $k \neq k' \in [K]$, the matrices 
$\cos(\eta_k \sqrt{A})$ and $\cos(\eta_{k'} \sqrt{A})$ commute. 
The commutativity, together with the fact that $\cos(\eta_k \sqrt{A})$ 
is symmetric for all $k \in [K]$, implies that the collection 
of matrices $\{\cos(\eta_k \sqrt{A})\}_{k \in [K]}$ is 
simultaneously diagonalizable.

To start, we note that since $A \succ 0$, $A$ admits the eigendecomposition:
$A = U \Lambda U^\top$, where $\Lambda = \mathrm{diag}(\lambda_1, \lambda_2, \dots, \lambda_d)$ is the diagonal matrix of the eigenvalues of $A$ and that $U$ is orthogonal.
Then, $U^\top A U = \Lambda$ and $U^\top \cos(\eta_k \sqrt{A} ) U = \cos( \eta_k \sqrt{\Lambda}), \forall k \in [K]$.
Furthermore, we have
\begin{align} 
\cos(\eta_k \sqrt{A}) \cos(\eta_{k'} \sqrt{A})
&= U \cos(\eta_k \sqrt{\Lambda}) U^\top
   U \cos(\eta_{k'} \sqrt{\Lambda}) U^\top \notag \\
&= U \cos(\eta_k \sqrt{\Lambda})
      \cos(\eta_{k'} \sqrt{\Lambda}) U^\top,  \notag
\end{align}
where we used $U^\top U = I$.
Similarly,
\begin{align} 
\cos(\eta_{k'} \sqrt{A}) \cos(\eta_k \sqrt{A})
=
U \cos(\eta_{k'} \sqrt{\Lambda})
   \cos(\eta_k \sqrt{\Lambda}) U^\top. \notag
\end{align}
Since diagonal matrices commute, we have
\begin{equation} 
\cos(\eta_k \sqrt{\Lambda})
\cos(\eta_{k'} \sqrt{\Lambda})
=
\cos(\eta_{k'} \sqrt{\Lambda})
\cos(\eta_k \sqrt{\Lambda}). \notag
\end{equation}
By combining all the above, we obtain 
$\cos(\eta_k \sqrt{A}) \cos(\eta_{k'} \sqrt{A})
=
\cos(\eta_{k'} \sqrt{A}) \cos(\eta_k \sqrt{A})$,
which shows the commutativity.
Therefore, we have
\begin{align}
& U^\top \Pi_{k=1}^K \cos(\eta_k \sqrt{A} ) U \notag
\\ & = \underbrace{ U^\top \left(  \cos\left( \eta_K \sqrt{A}    \right)  \right) U }_{ \cos(\eta_K \sqrt{\Lambda} )} 
\underbrace{
U^\top \left(  \cos\left(\eta_{K-1} \sqrt{A}    \right) \right) U }_{ \cos(\eta_{K-1} \sqrt{\Lambda} ) } \cdots
\underbrace{ U^\top \cos\left(\eta_{1} \sqrt{A}    \right) U }_{ \cos(\eta_1 \sqrt{\Lambda} )  } \notag
\\ & = \Pi_{k=1}^K \cos(\eta_k \sqrt{\Lambda} ). \notag
\end{align}
We can now read off the eigenvalues from the diagonal matrix $\Pi_{k=1}^K \cos(\eta_k \sqrt{\Lambda} )$ and get the eigenvalues.
\end{proof}

\begin{mdframed}[backgroundcolor=black!10,rightline=false,leftline=false,topline=false,bottomline=false]
\begin{lemma} \label{lem:rate} 
(Adapted from Lemma~3 in \cite{wang2022accelerating}; see also Theorem 2.1 in \cite{DST21} and Chapter 16.4 in \cite{vishnoi2013lx})
For any positive integer $K$, it holds that
$\max_{\lambda \in [m,L]} \left| \bar{\Phi}_K(\lambda) \right| 
= \frac{2}{ \left( \frac{ \sqrt{\kappa} + 1 }{ \sqrt{\kappa} -1 } \right)^K +
\left( \frac{ \sqrt{\kappa} + 1 }{ \sqrt{\kappa} -1 } \right)^{-K} },$
where $\kappa:= \frac{L}{m}$ is the condition number. 
\end{lemma}
\end{mdframed}

Equipped with the above lemmas, we are ready to show the convergence rate of Frictionless-HD.
\begin{mdframed}[backgroundcolor=black!10,rightline=false,leftline=false,topline=false,bottomline=false]
\begin{theorem} \label{thm:HD}
Frictionless-HD (Algorithm~\ref{alg:opt}) with the integration time 
$\eta_k = \frac{\pi}{2 } \frac{1}{ \sqrt{ r_{\sigma(k;K)}^{(K)} } }$
for strongly convex quadratic problems \eqref{quadratic} satisfies 
\begin{align}
 \| x_{K+1} - x_* \|_2 & \leq \max_{\lambda \in [m,L]} \left| \bar{\Phi}_K(\lambda) \right|   \| x_{1} - x_* \|_2 \label{16}
 \\ &
 \leq \frac{2}{ \left( \frac{ \sqrt{\kappa} + 1 }{ \sqrt{\kappa} -1 } \right)^K +
\left( \frac{ \sqrt{\kappa} + 1 }{ \sqrt{\kappa} -1 } \right)^{-K} }
    \| x_{1} - x_* \|_2, \label{17}
\end{align}
where $m := \lambda_{\min}(A)$ and $L := \lambda_{\max}(A)$,
for any permutation $\sigma$ of the array $[1,2,\dots,K]$.
\end{theorem}
\end{mdframed}

\begin{proof}

By Lemma~\ref{dym:quadratic}, we have
$  x_{k+1} - x_* =  \cos\left(\eta_k \sqrt{A} \right) (x_k - x_*)$.
Therefore,
\begin{align} 
\| x_{K+1} - x_* \|_2  & =  \| \Pi_{k=1}^K \cos\left(\eta_k \sqrt{A} \right) \left( x_k - x_* \right) \|_2 \notag \\
& \leq \| \Pi_{k=1}^K \cos\left(\eta_k \sqrt{A} \right) \|_2 \| x_k - x_*  \|_2  \notag \\
& \leq \max_{j=1,2, \dots, d} \left|  \Pi_{k=1}^K \cos\left( \eta_k \sqrt{\lambda_j}  \right)   \right| \|x_1 - x_* \|_2, \label{a1}
\end{align}
where in the last inequality we used Lemma~\ref{lem:commute}.  
We can upper-bound the cosine product of any $j$ as:
\begin{equation} \label{a2}
\textstyle
\left|\Pi_{k=1}^K \mathrm{cos}\left( \eta_k \sqrt{\lambda_j} \right) \right| 
\overset{(a)}{=}
\left|\Pi_{k=1}^K \mathrm{cos}\left( \frac{\pi}{2} \sqrt{ \frac{\lambda_j}{r_{\sigma(k;K)}^{(K)} } } \right) \right|
\overset{(b)}{\leq} \left| \bar{ \Phi }_K(\lambda_j)  \right| 
\end{equation}
where (a) is because $\eta_k= \frac{\pi}{2} \frac{1}{ \sqrt{r_{\sigma(k;K)}^{(K)}} }$, and (b) is by Lemma~\ref{lem:key}.

Combining \eqref{a1} and \eqref{a2} leads to 
\eqref{16}. Using Lemma~\ref{lem:rate} to further upper-bound 
$\max_{\lambda \in [m,L]} \left| \bar{\Phi}_K(\lambda) \right|$
leads to \eqref{17}.

\end{proof}

Theorem~\ref{thm:HD} can be viewed as an optimization counterpart of a recent  acceleration result of HMC in sampling. Specifically, \cite{wang2022accelerating} propose HMC with the Chebyshev integration time $\eta_k = \frac{\pi}{2 } \frac{1}{ \sqrt{ r_{\sigma(k;K)}^{(K)} } }$
to show an $1 - \Theta\left( \frac{1}{\sqrt{\kappa}}    \right)$ rate in terms of the 2-Wasserstein distance to a target Gaussian distribution in the context of sampling, 
where $\kappa$ is the condition number of the inverse covariance.
It is noted that the scaled-and-shifted Chebyshev polynomial $\bar{\Phi}_K(\lambda)$ appears in the lower-bound of the worst-case convergence rate for some ``first-order'' algorithms \citep{Nemirovski84}, which include Gradient Descent, Heavy Ball \citep{P64}, the Chebyshev method \citep{
golub1961chebyshev}, and more. However, we note that Frictionless-HD is not in the same family as these algorithms, and the worst-case rate should not be compared. 

Now we compare Frictionless-HD and Heavy Ball flow \eqref{HBODE} with resets. To this end, we introduce a notion, which we call the total integration time $T$. It is the \emph{total duration} of a flow over $K$ iterations, i.e., $T = \sum_{k=1}^K \eta_k$, where we recall that $\eta_k$ is the integration time for executing a flow at iteration $k$. 

\begin{mdframed}[backgroundcolor=black!10,rightline=false,leftline=false,topline=false,bottomline=false]
\begin{theorem} \label{example}
Consider applying Heavy Ball flow \eqref{HBODE} to \eqref{quadratic}.
Denote $m>0$ the least eigenvalue of the underlying matrix $A$.
Then, the total integration time $T^{\mathrm{HB}} 
= \tilde{\Omega}\left( \frac{1}{\sqrt{m}}  \right)$.
On the other hand, 
the total integration time $T^{\mathrm{HD}}=
\sum_{k=1}^K \frac{\pi}{2} \frac{1}{
\sqrt{ r_k^{(K)} } }
$ for Frictionless-HD 
is $T^{\mathrm{HD}}=\tilde{\Theta}\left( \frac{1}{\sqrt{m}}  \right)$ asymptotically. 
\end{theorem}
\end{mdframed}

\begin{proof}
Based on the update of the Heavy Ball flow for the strongly convex quadratic function \eqref{HB},
we have
\begin{align}
\left \| x_t^{\mathrm{HB}} - x_* \right \|_2
& \leq 
e^{-\sqrt m t}
\left \| \cos\left(\sqrt{A-mI}\,t\right) \right \|_2  \left \| x_0^{\mathrm{HB}} - x_* \right \|_2 \notag
\\ & \quad +  e^{-\sqrt m t} \left \| \left(\sqrt{A-mI}\right)^{-1}  \sin\left(\sqrt{A-mI}\,t \right) \right \|_2 
\sqrt{ m} \left \| x_0^{\mathrm{HB}} - x_*   \right  \|_2. \label{e18}
\end{align}
Denote $\{ \lambda_i\}_{i=1}^d$ the eigenvalues of $A$.
Then, we have that the spectral norm satisfies:
\begin{equation} \label{e19}
\left \| \cos\left(\sqrt{A-mI}\,t\right) \right \|_2 
= \max_{ i \in [d] } \left| \cos\left( \sqrt{\lambda_i -m } \right)\right| 
 \leq 1, \forall t \geq 0.
\end{equation}
Similarly,
\begin{equation} \label{e20}
\left\| (\sqrt{A-mI})^{-1} \sin(\sqrt{A-mI}\,t) \right\|_{2}
= \max_{i\in[d]}
\frac{\left|\sin(\sqrt{\lambda_i-m}\,t)\right|}{\sqrt{\lambda_i-m}}
\leq t,
\end{equation}
where we used the fact that $\left| \frac{\sin(\theta)}{\theta} \right| \leq 1, \forall \theta \in \reals$.
Combining \eqref{e18}, \eqref{e19}, and \eqref{e20}, we obtain
\begin{equation}
\left \| x_t^{\mathrm{HB}} - x_* \right \|_2
\leq 
e^{-\sqrt{m} t}
\left( 1 + \sqrt{m} t \right)\left \| x_0^{\mathrm{HB}} - x_*   \right  \|_2. \label{e21}
\end{equation}
Using \eqref{e21}, we deduce that, with $K$ resets of the Heavy Ball flow,
\begin{align} 
 \left\| x_{K+1} - x_* \right\|_2
& \leq \left( \prod_{k=1}^K \left( 1 + \sqrt{m}\, \eta_k^{\mathrm{HB}} \right) \right)
\exp\!\left(-\sqrt{m} \sum_{k=1}^K \eta_k^{\mathrm{HB}} \right)
\left\| x_{1}- x_* \right\|_2, \label{e22}
\end{align}
for any scheme of integration time $\{\eta_k^{\mathrm{HB}} \}_{k= 1}^K$.
Therefore, in the worst case, to have the distance $\left\| x_{K+1} - x_* \right\|_2  \leq \epsilon$, the total integration time $T^{\mathrm{HB}} := \sum_{k=1}^K \eta_k^{\mathrm{HB}} 
\geq 
 \frac{1}{\sqrt{m}}
\log \left( \frac{\left\| x_{1}- x_* \right\|_2}{\epsilon} \right)
= \tilde{\Omega}\left( \frac{1}{\sqrt{m}}  \right)$ approximately,
where $\tilde{\Omega}(\cdot)$ hides the log factor
$\log \left( \frac{\left\| x_{1}- x_* \right\|_2}{\epsilon} \right)
$.

Now let us analyze Frictionless-HD (Algorithm~\ref{alg:opt}).
We first compute the average integration time over $K$ iterations, which corresponds to $K$ times of resets. 
The average integration time $\bar{\eta}_K$ over $K$ iterations is: 
$\bar{\eta}_K:= \frac{1}{K} \sum_{k=1}^K \eta_k^{(K)} = \frac{1}{K} \sum_{k=1}^K \frac{\pi}{2} \frac{1}{
\sqrt{ r_k^{(K)} } }$,
where we recall $r_{k}^{(K)} := \frac{L+m}{2} - \frac{L-m}{2} \text{cos}\left( \frac{(k-\frac{1}{2}) \pi}{K}  \right)$, $\forall k \in [K]$, are the roots of the scaled-and-shifted Chebyshev Polynomial. 
An upper bound of the average integration time has been derived in Corollary 1 in \cite{jiang2022dissipation} and Appendix D in \cite{wang2022accelerating}. 
Specifically, they show that when the iterations $K$ is sufficiently large, the average integration time is 
$\bar{\eta}_K 
= \Theta \left(  \frac{1}{\sqrt{L+m}} \right)$.
This result together with Theorem~\ref{thm:HD} gives us the total integration time to get $\epsilon$-close to $0$.
Specifically, from Theorem~\ref{thm:HD}, we know that
the number of iterations $K$ in Frictionless-HD to get an $\epsilon$ distance is $K \leq \frac{ \sqrt{\kappa} + 1 }{2} \log \frac{ 2 \| x_1 - x_* \|  }{\epsilon}:= K_*$.
Therefore, the total integration time asymptotically is 
$T^{\mathrm{HD}} = K_* \times \frac{1}{K_*} \sum_{k=1}^{K_*} \eta_k^{(K_*)} 
= \tilde{ \Theta } \left( \frac{\sqrt{\kappa+1}}{\sqrt{L+m}} \right)
= \tilde{ \Theta } \left( \frac{1}{\sqrt{m}} \right),$
where $ \tilde{ \Theta }\left( \cdot \right)$ hides the log factor
$\log \left( \frac{ 2 \| x_1 - x_* \|  }{\epsilon} \right)$. 

\end{proof}

We now provide an example to show that the upper bound in \eqref{e21},
and consequently \eqref{e22}, are tight. 
Let 
$
A = \begin{bmatrix} \lambda_1 & 0 \\ 0 & \lambda_2 \end{bmatrix}
$
be a diagonal matrix with $\lambda_1 > \lambda_2 = m > 0$.
In this case, $x_* = [0, 0]^\top$.
From the Heavy Ball flow \eqref{HB}, we obtain the following  coordinate-wise dynamics:
$$
\begin{bmatrix} z_t[1] \\ z_t[2] \end{bmatrix}
=
\begin{bmatrix}
e^{-\sqrt{m}t}
\left(
\cos(\sqrt{\lambda_1 - m}\, t)
+
\frac{\sqrt{m}}{\sqrt{\lambda_1 - m}}
\sin(\sqrt{\lambda_1 - m}\, t)
\right)
z_{0}[1]
\\[1.2em]
e^{-\sqrt{m}t}(1+\sqrt{m}t)\, z_{0}[2]
\end{bmatrix}.
$$
From the dynamics above, we observe that the second coordinate of the Heavy Ball flow converges to $0$ at the rate $e^{-\sqrt{m}t}(1+\sqrt{m}t)$, which matches the factor appearing in \eqref{e21}.

\section{Frictionless Coordinate Hamiltonian Descent} \label{sec:Co-Hami}

In this section, we demonstrate that the optimization principle
of conducting a \emph{Frictionless} Hamiltonian Flow with resets 
can be naturally extended to a (block) coordinate-wise fashion, yielding a few old and new optimization algorithms. In particular, as we are going to show, the choice of when to stop a \emph{coordinate} Hamiltonian Flow can result in different optimization updates, which reveals the potential of the proposed optimization framework.

To begin with, let us denote $[\nabla f(x)]_i \in \reals^d$ as
\begin{equation*}
[\nabla f(x)]_i := \left[  0 , \dots , \nabla f(x)[i] ,  \dots , 0  \right]^\top, 
\end{equation*}
which zeroes out the gradient vector $\nabla f(x) \in \reals^d$ except its $i_{\mathrm{th}}$-element $\nabla f(x)[i] \in \reals$.
Furthermore, we denote $A_{i,j}$ as the element on the $i_{\mathrm{th}}$ row and the $j_{\mathrm{th}}$ column of $A$. 
Then, we consider a flow governed by the following differential equations, 
\begin{equation} \label{CHF}
\frac{d x}{d t} = v \quad \text{ and } \quad \frac{d v}{d t} = - [\nabla f(x)]_i.
\end{equation}
We will use
$(x_{t}, v_{t} ) = \hf_{t}^{(i)}( x_0, 0)$
to denote the execution of the flow defined in \eqref{CHF} for a duration $t$, given 
the initial position $x_0 \in \reals^d$ and the initial zero velocity $0 \in \reals^d$.
This operation outputs the position $x_{t}$ and
the velocity $v_{t}$ of the particle at the integration time $t$.

\begin{mdframed}[backgroundcolor=black!10,rightline=false,leftline=false,topline=false,bottomline=false]
\begin{lemma} \label{lem:cord_dym}
The solution $(x_t, v_t)= \hf_{t}^{(i)}( x_0, 0)$ is as follows.
For $j \neq i$, $x_t[j] = x_0[j]$ and $v_t[j]= 0$.
For dimension $i$, $x_t[i]$ is the solution to the following one-dimensional differential system: 
$$\frac{dx[i]}{dt} = v[i]  \quad \text{ and } \quad  \frac{dv[i]}{dt} = -\nabla f(x)[i],$$
with the initial position $x_0[i]$ and the initial velocity $v_0[i]$.
\end{lemma}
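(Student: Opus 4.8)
The plan is to exploit the fact that the driving force $[\nabla f(x)]_i$ has support only on coordinate $i$, so the system \eqref{CHF} decouples into a trivial part living on the complementary coordinates and a genuine one-dimensional dynamic on coordinate $i$. I would argue by writing down the candidate trajectory asserted in the statement, checking it solves \eqref{CHF}, and then invoking uniqueness of the ODE initial value problem to conclude that it is \emph{the} solution $\hf_t^{(i)}(x_0,0)$.

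First I would treat the coordinates $j \neq i$. Reading off the $j$-th component of \eqref{CHF}, we have $\frac{dv[j]}{dt} = -[\nabla f(x)]_i[j] = 0$, because by definition the vector $[\nabla f(x)]_i$ vanishes in every entry other than $i$. Hence $v_t[j]$ is constant in $t$, and since the flow is launched from zero velocity, $v_t[j] = v_0[j] = 0$ for all $t$. Feeding this back into $\frac{dx[j]}{dt} = v[j] = 0$ shows $x_t[j] = x_0[j]$ for all $t$, which establishes the first claim.

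Next I would turn to coordinate $i$. Its two equations are $\frac{dx[i]}{dt} = v[i]$ and $\frac{dv[i]}{dt} = -[\nabla f(x)]_i[i] = -\nabla f(x)[i]$, which is exactly the one-dimensional system in the statement, carrying the initial data $x_0[i]$ and $v_0[i]=0$. The one subtlety to address carefully is that $\nabla f(x)[i]$ a priori depends on the entire vector $x$, not on $x[i]$ alone; however, along the trajectory just identified the complementary coordinates are frozen at $x_0[j]$, so $\nabla f(x)[i]$ becomes a bona fide function of the single scalar $x[i]$ with its other arguments held at their initial values. This is precisely what makes the reduction to a one-dimensional system legitimate, and I expect it to be the only point in the proof that requires genuine care.

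Finally I would close by uniqueness. The candidate $(x_t,v_t)$ assembled from the frozen complementary coordinates together with the solution of the one-dimensional system satisfies \eqref{CHF} with the prescribed initial condition $(x_0,0)$; since $\nabla f$ is locally Lipschitz --- and in fact affine in the quadratic setting \eqref{quadratic} of primary interest --- the Picard--Lindel\"of theorem guarantees the initial value problem has a unique solution, so this candidate coincides with $\hf_t^{(i)}(x_0,0)$. Everything outside the freezing observation of the previous paragraph is routine bookkeeping.
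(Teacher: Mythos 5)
Your proof is correct and follows essentially the same route as the paper's: read off the $j$-th components of \eqref{CHF} to see that $v_t[j]$ is constant and hence zero, so $x_t[j]$ is frozen, and the system reduces to the stated one-dimensional dynamic on coordinate $i$. The only difference is that you add an explicit Picard--Lindel\"of uniqueness step and flag that $\nabla f(x)[i]$ becomes a function of the single scalar $x[i]$ once the other coordinates are frozen; the paper leaves both points implicit, so your version is slightly more careful but not a different argument.
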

\end{mdframed}

\begin{proof}
To see this, consider a dimension $j \neq i$,
then, the dynamic is
\begin{equation*}
\frac{dx[j]}{dt} = v[j] \quad \text{ and } \quad \frac{dv[j]}{dt} = 0,
\end{equation*}
which is equivalent to $\frac{d^2 x[j] }{d t^2} = 0$ and hence implies that $\frac{dx[j]}{dt} = v[j]$ is a constant.
By the initial condition $v_0[j]=0$, this means that $v_{t}[j]=0$ for any time $t$ and consequently along the flow $x_{t}[j] = x_0[j]$.

Therefore, the system effectively reduces to a one-dimensional dynamic,
\begin{equation*}
\frac{dx[i]}{dt} = v[i]  \quad \text{ and } \quad  \frac{dv[i]}{dt} = -\nabla f(x)[i]. 
\end{equation*}
\end{proof}

What Lemma~\ref{lem:cord_dym} shows is that when executing the flow \eqref{CHF}, only the $i_{\mathrm{th}}$ element of $x$ is being updated. Hence, we refer \eqref{CHF} as \emph{Coordinate Hamiltonian Flow}.
With Lemma~\ref{lem:cord_dym}, we can deduce a crucial property of the Coordinate Hamiltonian Flow, stated as follows.

\begin{mdframed}[backgroundcolor=black!10,rightline=false,leftline=false,topline=false,bottomline=false]
\begin{lemma} \label{lem:coordinate-wise}
For any $i \in [d]$, the Coordinate Hamiltonian Flow \eqref{CHF} with the zero initial velocity, i.e., 
$(x_{t}, v_{t} )= \hf_{t}^{(i)}( x_0, 0)$, satisfies %
$ f(x_t) + \frac{ v_t^2[i] }{2} = H(x_t,v_t) = H(x_0, v_0) = f(x_0). $
\end{lemma}
\end{mdframed}

\begin{proof}
We have
\begin{align}
\frac{dH(x,v)}{dt} &  = \frac{\partial H(x,v)}{ \partial x }  \frac{\partial x}{ \partial t } +
\frac{\partial H(x,v)}{ \partial v }  \frac{\partial v}{ \partial t } \notag
\\  & \overset{(a)}{=} \nabla f(x)[i] v[i] + \frac{\partial H(x,v)}{ \partial v }  \frac{\partial v}{ \partial t } \notag
\\ & \overset{(b)}{=} \nabla f(x)[i] v[i] - v[i] \nabla f(x)[i] = 0, \notag
\end{align}
where in (a) we used the fact that $v_t[j]=0, \forall t$, $j \neq i$ by Lemma~\ref{lem:cord_dym}, 
and in (b) we used the update $\eqref{CHF}$.
Since the Hamiltonian is conserved, we have the result. 
Hence, $f(x_t) \leq f(x_0)$ along the Coordinate Hamiltonian Flow.

\end{proof}

\begin{algorithm}[t]
\begin{algorithmic}[1]
\small
\caption{\textsc{Frictionless Coordinate Hamiltonian Descent}
} \label{alg:opt3}{}
\STATE Input: an initial point $x_{1} \in \reals^{d}$, number of iterations $K$, and a scheme of integration time $\{\eta_{k,i}\}$.
\FOR{$k=1$ to $K$}
\STATE $x_k^{(0)} \gets x_k \in \reals^d$.
\FOR{$i=1$ to $d$}
\STATE Compute $(x_{k}^{(i)}, v_{k}^{(i)} ) = \hf_{\eta_{k,i}}^{(i)}(x_{k}^{(i-1)}, 0)$. 
\STATE  Set $x_{k+1}[i] = x_k^{(i)}[i]$.
\ENDFOR
\ENDFOR
\STATE Return $x_{K+1}$.
\end{algorithmic}
\end{algorithm}

Lemma~\ref{lem:coordinate-wise} reveals that the function value never increases along the Coordinate Hamiltonian Flow. Using this property, we propose Frictionless Coordinate Hamiltonian Descent (Algorithm~\ref{alg:opt3}), which is built upon Coordinate Hamiltonian Flow and is  also a ``descent'' method like HD.
The following lemma, though elementary, will be used multiple times shortly to derive the update of Frictionless Coordinate Hamiltonian Descent and its variants
 and is included here for completeness.

\begin{mdframed}[backgroundcolor=black!10,rightline=false,leftline=false,topline=false,bottomline=false]
\begin{lemma} \label{lem:1d}
Consider a one-dimensional system with an initial condition $(x_0,v_0=0)$:
$$ \dot{x} = v  \quad \text{ and } \quad \dot{v}  = - \alpha x + \beta.$$
If $\alpha \neq 0$, then
the solution is 
\begin{align*}
 x_t  = \frac{\beta}{\alpha}  + \cos\left( t \sqrt{\alpha} \right)  \left(x_0 - \frac{\beta}{\alpha} \right)  \quad \text{ and } \quad
 v_t   = -\sqrt{\alpha} \sin\left( t \sqrt{ \alpha } \right) \left(  x_0 - \frac{\beta}{\alpha} \right). 
\end{align*} 
\end{lemma}
\end{mdframed}

\begin{proof}

Let $y:= x - \frac{\beta}{\alpha}$.
Then, we have an equivalent system:
$\dot{y} = v$ and $\dot{v} = - \alpha y$,
which has the solution
\begin{equation*}
y_t = \cos( t \sqrt{\alpha} ) y_0.
\end{equation*}
Substituting $y_t = x_t - \frac{\beta}{\alpha}$, we get the expression of $x_t$ and differentiating $x_t$ w.r.t.~$t$, we get $v_t$.
\qed  
\end{proof}

\begin{mdframed}[backgroundcolor=black!10,rightline=false,leftline=false,topline=false,bottomline=false]
\begin{lemma} \label{lem:cod}
Define 
$$\p{k}{i}  := \frac{b_i - \sum_{j < i} A_{i,j} x_{k+1}[j] - \sum_{j>i} A_{i,j} x_{k}[j]   }{A_{i,i}}.$$
For solving the quadratic problems \eqref{quadratic} via Frictionless-CHD (Algorithm~\ref{alg:opt3}),
at each inner iteration $i$ of an outer iteration $k$, the update 
$(x_{k}^{(i)}, v_{k}^{(i)} ) = \hf_{\eta_{k,i}}^{(i)}(x_{k}^{(i-1)}, 0)$ is implemented as
\begin{align}
x_{k}^{(i)}[i] & = \p{k}{i} + \cos \left( \eta_{k,i} \sqrt{ A_{i,i} } \right) \left(x_k^{(i-1)}[i] - \p{k}{i} \right) \label{hichd} \\
v_{k}^{(i)}[i] & = - \sqrt{ A_{i,i} } \sin \left( \eta_{k,i} \sqrt{ A_{i,i} } \right) \left(x_k^{(i-1)}[i] - \p{k}{i} \right), \label{hivhd}
\end{align}
and for $j \neq i$, $x_{k}^{(i)}[j]  = x_{k}^{(i-1)}[j]$ and $v_{k}^{(i)}[j]  =0$.  
\end{lemma}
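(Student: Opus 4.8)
The plan is to reduce the $d$-dimensional coordinate flow to a one-dimensional simple harmonic oscillator and solve it in closed form. First I would invoke Lemma~\ref{lem:cord_dym}: executing $\hf_{\eta_{k,i}}^{(i)}(x_k^{(i-1)},0)$ freezes every coordinate $j\neq i$, so that $x_{k}^{(i)}[j]=x_{k}^{(i-1)}[j]$ and $v_{k}^{(i)}[j]=0$, which already establishes the stated claim for $j\neq i$. Meanwhile coordinate $i$ evolves under the scalar system $\frac{dx[i]}{dt}=v[i]$, $\frac{dv[i]}{dt}=-\nabla f(x)[i]$ with initial data $\bigl(x_k^{(i-1)}[i],\,0\bigr)$, so the entire problem collapses to this one scalar ODE.

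Next I would compute the relevant gradient component for the quadratic \eqref{quadratic}, where $\nabla f(x)=Ax-b$, giving $\nabla f(x)[i]=A_{i,i}\,x[i]+\sum_{j\neq i}A_{i,j}\,x[j]-b_i$. The key bookkeeping step is to identify the frozen values: by the sequential inner loop of Algorithm~\ref{alg:opt3}, at the start of inner iteration $i$ the vector $x_k^{(i-1)}$ holds the already-updated entries $x_{k+1}[j]$ for $j<i$ and the stale entries $x_k[j]$ for $j>i$, and these stay constant throughout the coordinate-$i$ flow. Substituting and matching against the definition of $\p{k}{i}$ then lets me write $\nabla f(x)[i]=A_{i,i}\bigl(x[i]-\p{k}{i}\bigr)$.

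With this, the change of variables $u:=x[i]-\p{k}{i}$ (legitimate since $\p{k}{i}$ is constant along this flow) turns the scalar system into $\ddot u=-A_{i,i}\,u$, i.e.\ simple harmonic motion with frequency $\sqrt{A_{i,i}}$. Solving with initial position $u(0)=x_k^{(i-1)}[i]-\p{k}{i}$ and zero initial velocity yields $u(t)=u(0)\cos\bigl(\sqrt{A_{i,i}}\,t\bigr)$ and $v[i](t)=\dot u(t)=-\sqrt{A_{i,i}}\sin\bigl(\sqrt{A_{i,i}}\,t\bigr)\,u(0)$. Evaluating at $t=\eta_{k,i}$ and undoing the shift reproduces exactly \eqref{hichd} and \eqref{hivhd}.

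I expect the main obstacle to lie in the coordinate-bookkeeping of the second paragraph rather than in the ODE itself: one must check carefully that the Gauss--Seidel-style split---updated coordinates for $j<i$, stale coordinates for $j>i$---is precisely what the sequential inner loop produces, so that the constant folded into the oscillator is the specific quantity $\p{k}{i}$ and not some other combination of old and new entries. Once that identification is secured, the remaining steps are the standard solution of a harmonic oscillator and are purely mechanical.
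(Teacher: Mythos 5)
Your proposal is correct and follows essentially the same route as the paper: reduce to the one-dimensional coordinate dynamics via Lemma~\ref{lem:cord_dym}, identify the gradient component as $A_{i,i}\bigl(x[i]-\p{k}{i}\bigr)$ using the Gauss--Seidel-style split of updated and stale entries, and solve the resulting shifted harmonic oscillator (which the paper packages as its Lemma~\ref{lem:1d}). Your explicit bookkeeping of which entries of $x_k^{(i-1)}$ equal $x_{k+1}[j]$ versus $x_k[j]$ is exactly the identification the paper relies on implicitly through the definition of $\p{k}{i}$.
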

\end{mdframed}

\begin{proof}
We use the notation that $\dot{x}:=\frac{dx}{dt}$ and $\dot{v}:=\frac{dv}{dt}$ in the following. 

By Lemma~\ref{lem:cord_dym}, we have
\begin{equation*}
\dot{x}[i] = v[i]  \quad \text{ and } \quad  \dot{v}[i] = -\nabla f(x)[i] 
= - (Ax-b)[i] 
= - A_{i,i} x[i] + \beta_{i},
\end{equation*}
where $\beta_i:= b[i] - \sum_{j\neq i}^d A_{i,j} x[j] $.
Applying Lemma~\ref{lem:1d} with $\alpha \gets A_{i,i}$, $\beta \gets \beta_i$,  $x \gets x_{k}^{(i-1)}$, $ t \gets \eta_{k,i}$ leads to the result.

\end{proof}

In numerical linear algebra, among the classical iterative methods for solving linear systems of equations are the Gauss-Seidel method and Successive Over-Relaxation \citep{young1954iterative,hackbusch1994iterative,quarteroni2006numerical,iserles2009first}. 
For Gauss-Seidel, at each inner iteration $i$ of an outer iteration $k$, the update is:
\begin{align} \label{gauss}
& \textbf{ Gauss-Seidel: } \quad  
 x_{k+1}[i]  = \p{k}{i}=
\frac{b_i - \sum_{j < i} A_{i,j} x_{k+1}[j] - \sum_{j>i} A_{i,j} x_{k}[j]   }{A_{i,i}}.
\end{align}
It is worth noting that Gauss-Seidel is Coordinate Descent \citep{beck2013convergence,wright2015coordinate} when applied to the quadratic problem \eqref{quadratic}.
On the other hand, Successive Over-Relaxation, which can be provably shown to outperform Gauss-Seidel
when the underlying matrix $A$ satisfies a notion called ``consistently-ordered'' (see e.g., \cite{young1954iterative}, Theorem 5.6.5 in \cite{hackbusch1994iterative}, Theorem 10.10 in \cite{iserles2009first}), 
has the following update:
\begin{align}
& \textbf{ Successive Over-Relaxation: }  \quad  
 x_{k+1}[i]   = c \, \p{k}{i} + (1-c) x_k[i],  \label{hisor}
\end{align}
where $c \in (0,2)$ is a parameter. By comparing \eqref{hichd}, \eqref{gauss}, and \eqref{hisor},
it becomes evident that Gauss-Seidel and Successive Over-relaxation are both instances of Frictionless-CHD
distinguished by their respective values of the integration time $\eta_{k,i}$.
For the notation brevity, in the rest of the paper, we denote 
$\cos^{-1}(1-c)$ as any $\theta \in \reals$ such that $\cos(\theta) = (1-c)$, and similarly, $\sin^{-1}(c)$ as any $\theta \in \reals$ such that $\sin(\theta) = c$, where $c \in [0,1]$. 
We formally state the above results as follows. 

\begin{mdframed}[backgroundcolor=black!10,rightline=false,leftline=false,topline=false,bottomline=false]
\begin{theorem}
The Gauss-Seidel method is 
Frictionless-CHD (Algorithm~\ref{alg:opt3}) with the integration time $\eta_{k,i}=\frac{1}{\sqrt{A_{i,i}}} \sin^{-1}(1)$, 
while Successive Over-relaxation is Frictionless-CHD with the integration time
$\eta_{k,i} = \frac{1}{\sqrt{A_{i,i}}} \cos^{-1}(1-c) $, where $c \in (0,2)$ is a parameter.
\end{theorem}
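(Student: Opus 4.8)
The plan is to derive both statements directly from the closed-form CHD update in Lemma~\ref{lem:cod}, which already expresses each inner step $(x_k^{(i)},v_k^{(i)}) = \hf_{\eta_{k,i}}^{(i)}(x_k^{(i-1)},0)$ via \eqref{hichd}. Consequently, no new differential equation needs to be solved: the whole argument reduces to substituting the prescribed integration times into the scalar factor $\cos(\eta_{k,i}\sqrt{A_{i,i}})$ and matching the resulting recursion against the Gauss-Seidel update \eqref{gauss} and the Successive Over-relaxation update \eqref{hisor}. Everything downstream of Lemma~\ref{lem:cod} is algebraic.

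For the Gauss-Seidel claim, I would set $\eta_{k,i} = \frac{\pi}{2}\frac{1}{\sqrt{A_{i,i}}}$, so that $\eta_{k,i}\sqrt{A_{i,i}} = \frac{\pi}{2}$ and hence $\cos(\eta_{k,i}\sqrt{A_{i,i}}) = 0$. Plugging this into \eqref{hichd} annihilates the term proportional to $(x_k^{(i-1)}[i] - \p{k}{i})$ and leaves $x_k^{(i)}[i] = \p{k}{i}$. Since Algorithm~\ref{alg:opt3} sets $x_{k+1}[i] = x_k^{(i)}[i]$, this is exactly \eqref{gauss}.

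For Successive Over-relaxation, I would set $\eta_{k,i} = \frac{1}{\sqrt{A_{i,i}}}\cos^{-1}(1-c)$, so that $\eta_{k,i}\sqrt{A_{i,i}} = \cos^{-1}(1-c)$ and $\cos(\eta_{k,i}\sqrt{A_{i,i}}) = 1-c$. Substituting into \eqref{hichd} and regrouping yields $x_k^{(i)}[i] = \p{k}{i} + (1-c)(x_k^{(i-1)}[i] - \p{k}{i}) = c\,\p{k}{i} + (1-c)\,x_k^{(i-1)}[i]$. To match \eqref{hisor}, I still need the identity $x_k^{(i-1)}[i] = x_k[i]$: by Lemma~\ref{lem:cord_dym} the flow $\hf^{(i)}$ leaves every coordinate except $i$ fixed, and the inner loop of Algorithm~\ref{alg:opt3} touches coordinate $i$ only at inner step $i$; hence after $i-1$ inner steps coordinate $i$ still holds its starting value $x_k^{(0)}[i] = x_k[i]$. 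With this substitution, $x_{k+1}[i] = c\,\p{k}{i} + (1-c)x_k[i]$, precisely \eqref{hisor}.

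The computations are routine; the only genuine care lies in two places. First, the index bookkeeping above — confirming that $x_k^{(i-1)}[i] = x_k[i]$ while the coordinates $j<i$ inside $\p{k}{i}$ already carry their updated values $x_{k+1}[j]$ — which is exactly the sequential structure baked into the definition of $\p{k}{i}$ in Lemma~\ref{lem:cod}, so I would invoke that lemma rather than re-derive it. Second, I would verify that the prescribed SOR integration time is well-posed: for $c \in (0,2)$ we have $1-c \in (-1,1)$, so $\cos^{-1}(1-c)$ is defined and lies in $(0,\pi)$, giving a strictly positive $\eta_{k,i}$; the boundary choice $c=1$ recovers $\eta_{k,i} = \frac{\pi}{2}\frac{1}{\sqrt{A_{i,i}}}$ and hence Gauss-Seidel, consistent with the first part.
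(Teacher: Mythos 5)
Your proposal is correct and follows the same route as the paper, which presents this theorem as an immediate consequence of comparing the closed-form CHD update in Lemma~\ref{lem:cod} with the Gauss-Seidel and SOR recursions (the paper in fact omits a written proof, treating the substitution $\cos(\eta_{k,i}\sqrt{A_{i,i}})=0$ or $1-c$ as evident). Your added checks — that $x_k^{(i-1)}[i]=x_k[i]$ because the inner loop only touches coordinate $i$ at step $i$, and that $\cos^{-1}(1-c)$ is well-defined for $c\in(0,2)$ — are correct and make the argument more complete than the paper's.
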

\end{mdframed}

We now show that Frictionless-CHD asymptotically converges to the global optimal point $x_*$ for any $\eta_{k,i} \neq \frac{1}{ \sqrt{A_{i,i} } } \sin^{-1}(0)$.
\begin{mdframed}[backgroundcolor=black!10,rightline=false,leftline=false,topline=false,bottomline=false]
\begin{theorem} \label{thm:converges}
For any $\eta_{k,i} \neq \frac{1}{ \sqrt{A_{i,i} } } \sin^{-1}(0)$, 
the update $x_k$ by Frictionless-CHD (Algorithm~\ref{alg:opt3}) asymptotically converges to $x_*$ the solution of the strongly convex quadratic problem \eqref{quadratic}.
\end{theorem}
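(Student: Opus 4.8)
The plan is to run a Lyapunov-style argument driven by the energy identity of Lemma~\ref{lem:coordinate-wise}, using the function value $f$ (equivalently the $A$-norm distance to $x_*$) as the Lyapunov function. First I would record the per-inner-step decrease: applying Lemma~\ref{lem:coordinate-wise} to the flow $\hf_{\eta_{k,i}}^{(i)}(x_{k}^{(i-1)},0)$ gives $f(x_{k}^{(i)}) = f(x_{k}^{(i-1)}) - \frac{1}{2}(v_{k}^{(i)}[i])^2$. Telescoping over $i=1,\dots,d$ (with $x_{k}^{(0)} = x_k$ and $x_{k}^{(d)} = x_{k+1}$) and then over $k$ yields
\[
f(x_{K+1}) = f(x_1) - \frac{1}{2}\sum_{k=1}^{K}\sum_{i=1}^{d} (v_{k}^{(i)}[i])^2 .
\]
Since $f$ is strongly convex it is bounded below by $f(x_*)$, so the double series $\sum_{k,i}(v_{k}^{(i)}[i])^2$ converges; in particular $v_{k}^{(i)}[i]\to 0$ as $k\to\infty$ for every coordinate $i$.

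Next I would translate the vanishing of the terminal velocities into the vanishing of the coordinate residuals. By the closed form in Lemma~\ref{lem:cod}, $v_{k}^{(i)}[i] = -\sqrt{A_{i,i}}\,\sin(\eta_{k,i}\sqrt{A_{i,i}})\,r_{k,i}$, where $r_{k,i} := x_k[i] - \p{k}{i}$ (note $x_{k}^{(i-1)}[i] = x_k[i]$, since the inner loop has not yet touched coordinate $i$). Under the hypothesis $\eta_{k,i}\neq \frac{1}{\sqrt{A_{i,i}}}\arcsin(0)$ the sine factor is nonzero, so $v_{k}^{(i)}[i]\to 0$ forces $r_{k,i}\to 0$. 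I would also record the companion identity $x_{k+1}[i] - x_k[i] = (\cos(\eta_{k,i}\sqrt{A_{i,i}}) - 1)\,r_{k,i}$, which shows $x_{k+1} - x_k \to 0$.

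Finally I would relate the mixed residual $r_{k,i}$ back to the genuine residual $Ax_k - b$. Expanding $\p{k}{i}$ gives
\[
(Ax_k - b)[i] = A_{i,i}\, r_{k,i} - \sum_{j<i} A_{i,j}\,(x_{k+1}[j] - x_k[j]),
\]
and both terms on the right tend to $0$ by the previous step. Hence $Ax_k - b \to 0$, and since $A\succ 0$ is invertible, $x_k \to A^{-1}b = x_*$, which is the claim.

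I expect the genuine obstacle to be the implication ``$v_{k}^{(i)}[i]\to 0 \Rightarrow r_{k,i}\to 0$'': this needs $|\sin(\eta_{k,i}\sqrt{A_{i,i}})|$ to stay bounded away from $0$ as $k\to\infty$, not merely to be nonzero at each step. Already in dimension one the update reads $x_{k+1}-x_* = \cos(\eta_{k,1}\sqrt{A_{1,1}})(x_k - x_*)$, and a schedule with $\cos\to 1$ quickly (e.g. $1-1/k^2$, whose infinite product converges to a positive limit) keeps every sine nonzero yet prevents convergence; so the pointwise condition alone is not quite enough. I would therefore read the hypothesis as providing a uniform separation from the zeros of sine --- for instance when $\eta_{k,i}$ is independent of $k$, or ranges over a compact set avoiding $\{m\pi/\sqrt{A_{i,i}}\}$ --- equivalently the relaxation parameters $c_{k,i}=1-\cos(\eta_{k,i}\sqrt{A_{i,i}})$ lying in a fixed compact subinterval of $(0,2)$. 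In the $k$-independent case one can bypass summability entirely: the affine map $x\mapsto x_{k+1}$ contracts the $A$-norm strictly off $x_*$ (equality in the energy identity forces all $r_i=0$, hence a fixed point, hence $x_*$), so a compactness argument on the $A$-unit sphere gives $\rho(M)<1$ and even linear convergence.
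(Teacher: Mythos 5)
Your proof follows the same route as the paper's: the energy identity of Lemma~\ref{lem:coordinate-wise}, telescoped over the inner and outer loops, combined with boundedness of $f$ from below. The difference is that you execute the limiting step correctly where the paper does not. The paper asserts that ``we eventually have $f(x_k)=f(x_{k+1})$ for some $k$,'' i.e., that the monotone bounded sequence $f(x_k)$ becomes exactly stationary at a finite iteration, and then solves the resulting exact equations $Ax_k=b$. That step is unjustified; what the telescoping actually yields is $\sum_{k,i}\bigl(v_k^{(i)}[i]\bigr)^2<\infty$, hence $v_k^{(i)}[i]\to 0$, which is precisely your starting point. Your subsequent chain --- the nonvanishing sine factor forces $r_{k,i}\to 0$, the identity $(Ax_k-b)[i]=A_{i,i}\,r_{k,i}-\sum_{j<i}A_{i,j}\bigl(x_{k+1}[j]-x_k[j]\bigr)$ transfers this to the true residual, and invertibility of $A$ finishes --- is correct and is the rigorous version of what the paper intends.

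The obstacle you flag at the end is genuine, and it is a gap in the theorem as stated rather than a defect of your argument. Your one-dimensional example with $\cos(\eta_{k,1}\sqrt{A_{1,1}})=1-1/k^2$ satisfies the hypothesis $\sin(\eta_{k,i}\sqrt{A_{i,i}})\neq 0$ at every step, yet $\prod_{k\ge 2}(1-1/k^2)=1/2$, so $x_k$ converges to $x_*+\tfrac{1}{2}(x_1-x_*)\neq x_*$; for $k$-dependent schedules the conclusion can therefore fail, and the implication ``$v_k^{(i)}[i]\to 0\Rightarrow r_{k,i}\to 0$'' indeed requires $|\sin(\eta_{k,i}\sqrt{A_{i,i}})|$ to be bounded away from zero uniformly in $k$. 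Your proposed repair --- read the hypothesis as $\eta_{k,i}=\eta_i$ independent of $k$ (which is all the paper needs for Gauss--Seidel and Successive Over-Relaxation, where $c$ is fixed), or more generally require the quantities $1-\cos(\eta_{k,i}\sqrt{A_{i,i}})$ to lie in a fixed compact subinterval of $(0,2)$ --- is the right one, and under either reading your argument (or the contraction/fixed-point argument you sketch for the constant case) closes the proof.
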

\end{mdframed}

\begin{proof}
By the update of Frictionless-CHD and the conservation of the Hamiltonian (Lemma~\ref{lem:coordinate-wise}), we have
\begin{equation} \label{inner}
 f(x_{k-1}^{(i)}) = f(x_{k}^{(i)}) + \frac{1}{2}  ( v_{k}^{(i)}[i] )^2.
\end{equation}

By a telescoping sum of \eqref{inner} from $i=1$ to $d$, we get
$f(x_{k}) = f(x_{k+1}) + \frac{1}{2} \sum_{i=1}^d  ( v_{k}^{(i)}[i] )^2,$ which means that $\{ f(x_k) \}_{k \geq 1}$ is a non-increasing sequence.
In addition, we have
$ \sum_{k=1}^{\infty} \left( f(x_k) - f(x_{k+1}) \right) =
f(x_1) - \lim_{k \to \infty} f(x_k) < \infty$,
as $f(x_k)$ is lower-bounded by the optimal value $f(x_*)$, which is finite.
However, we also have 
$\sum_{k=1}^{\infty} \left( f(x_k) - f(x_{k+1}) \right) = 
\sum_{k=1}^{\infty}  \sum_{i=1}^d  \frac{1}{2} ( v_{k}^{(i)}[i] )^2$.
Hence, we have
\begin{equation} \label{summable}
\sum_{k=1}^{\infty}  \sum_{i=1}^d  \frac{1}{2} ( v_{k}^{(i)}[i] )^2 < \infty.
\end{equation}
From \eqref{summable}, we conclude that
$\lim_{k \to \infty} v_{k}^{(i)}[i] = 0, \forall i \in [d]$,
which implies that $\lim_{k \to \infty} f(x_k)- f(x_{k+1}) = 0$.

Since the integration time is chosen such that $\sin \left( \eta_{k,i} \sqrt{ A_{ii} } \right) \neq 0, \forall i \in [d]$, we deduce that
when $f(x_k)=f(x_{k+1})$, the expression of the velocity \eqref{hivhd} implies 
\begin{equation} \label{b}
x_k^{(i-1)}[i] - \p{k}{i} = 0, \forall i \in [d]
\end{equation}
Using the definition $\p{k}{i}$ in \eqref{hichd} and the update of 
Frictionless-CHD $(x_k[i]=x_k^{(i-1)}[i])$, the condition \eqref{b} can be re-written as 
\begin{align} 
A_{i,i} x_{k}[i] & = b[i] - \sum_{j < i} A_{k,j} x_{k+1}[j] - \sum_{j>i} A_{k,j} x_{k}[j]   \label{ahi}
\end{align}
On the other hand, when $x_k^{(i-1)}[i] - \p{k}{i} = 0, \forall i \in [d] $, we have $x_k^{(i)}[i]= \p{k}{i}, \forall i \in [d]$ from the update \eqref{hichd}. Consequently, $x_{k+1}[i] = x_k^{(i)}[i] = x_k^{(i-1)}[i] = x_{k}[i]$, $\forall i \in [d]$. 
Therefore, the system of equations \eqref{ahi} is equivalent to $A x_k = b$ and $A x_{k+1} = b$.

Since $x_* = A^{-1} b$ is the only solution to $A x = b$, we know that $x_k = x_* = x_{k+1}$.
\end{proof}

An immediate application of Theorem~\ref{thm:converges} provides a simple proof of showing that choosing the parameter $c \in (0,2)$ guarantees the convergence of Successive Over-Relaxation. On the other hand, the original proof in \cite{ostrowski1954linear} demonstrating the convergence of the Successive Over-Relaxation when the parameter satisfies $0 < c < 2$ is quite involved (see also Chapter 3.4 in \cite{varga1962iterative}). 

\begin{mdframed}[backgroundcolor=black!10,rightline=false,leftline=false,topline=false,bottomline=false]
\begin{theorem} 
If $0< c < 2$, then Successive Over-Relaxation converges to 
$x_*$ of Problem~\ref{quadratic}.  
\end{theorem}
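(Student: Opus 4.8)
The plan is to obtain this statement as an immediate corollary of Theorem~\ref{thm:converges}, exploiting the identification of Successive Over-Relaxation as Coordinate Hamiltonian Descent with integration time $\eta_{k,i} = \frac{1}{\sqrt{A_{i,i}}} \cos^{-1}(1-c)$ established just above. Theorem~\ref{thm:converges} already guarantees convergence to $x_*$ whenever the integration times are non-degenerate, so the only work left is to check that the specific choice coming from SOR satisfies that non-degeneracy condition exactly when $0 < c < 2$.

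Concretely, the hypothesis underpinning Theorem~\ref{thm:converges} is that $\sin\left( \eta_{k,i} \sqrt{A_{i,i}} \right) \neq 0$ for every $i$, since this is what lets the velocity expression \eqref{hivhd} force $x_k^{(i-1)}[i] - \p{k}{i} = 0$ once the function value stabilizes (the step leading to \eqref{b}). Substituting the SOR integration time gives $\eta_{k,i}\sqrt{A_{i,i}} = \cos^{-1}(1-c)$. For $c \in (0,2)$ we have $1-c \in (-1,1)$, so $\cos^{-1}(1-c)$ is well defined and lies in the open interval $(0,\pi)$, on which $\sin$ is strictly positive; equivalently $\sin\left( \cos^{-1}(1-c) \right) = \sqrt{1-(1-c)^2} > 0$. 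Hence the non-degeneracy hypothesis holds at every inner step, and convergence of $x_k$ to $x_*$ follows directly by invoking Theorem~\ref{thm:converges}.

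The only subtlety — and the reason the strict bounds $0 < c < 2$ are sharp — is the behaviour at the two endpoints. At $c = 0$ one has $1 - c = 1$ and $\cos^{-1}(1) = 0$, while at $c = 2$ one has $1 - c = -1$ and $\cos^{-1}(-1) = \pi$; in both cases $\sin\left( \eta_{k,i}\sqrt{A_{i,i}} \right) = 0$, corresponding to a degenerate update (no progress, respectively an energy-preserving reflection) that violates the hypothesis of Theorem~\ref{thm:converges}. Thus there is essentially no genuine obstacle here: the open condition $0 < c < 2$ is precisely what keeps each integration time away from the degenerate values, and the involved classical argument of \citet{ostrowski1954linear} collapses to this one-line verification. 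The main care needed is simply to state the $\sin \neq 0$ criterion correctly (as $\eta_{k,i}\sqrt{A_{i,i}}$ avoiding all integer multiples of $\pi$, not merely $0$) and to note that $\cos^{-1}(1-c) \in (0,\pi)$ automatically avoids every such multiple.
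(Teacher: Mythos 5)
Your proposal is correct and follows essentially the same route as the paper: identify SOR as CHD with $\eta_{k,i}=\frac{1}{\sqrt{A_{i,i}}}\cos^{-1}(1-c)$ and verify that $c\in(0,2)$ is exactly the range in which the non-degeneracy hypothesis of Theorem~\ref{thm:converges} (equivalently, $\sin(\eta_{k,i}\sqrt{A_{i,i}})\neq 0$, i.e.\ $\cos(\eta_{k,i}\sqrt{A_{i,i}})\neq\pm 1$) is satisfied. Your explicit check that $\cos^{-1}(1-c)\in(0,\pi)$, where $\sin$ is strictly positive, and your remark that the true criterion is avoidance of all integer multiples of $\pi$ (not just $0$), are a slightly more careful rendering of the same one-line argument the paper gives.
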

\end{mdframed}

\begin{proof}
Observe that the condition of Theorem~\ref{thm:converges}, $\eta_{k,i} \neq \frac{1}{ \sqrt{A_{i,i} } } \sin^{-1}(0)$, holds if and only if $\cos\left(\eta_{k,i} \sqrt{A_{i,i} }\right) \neq \pm 1$.
Hence, by choosing $\eta_{k,i}$ such that $\cos\left( \eta_{k,i} \sqrt{A_{i,i} } \right)= 1-c$, where $c \in (0,2)$, we can guarantee the convergence. 
\qed
\end{proof}

\noindent
\textbf{Randomize Coordinate Updates:}
We can also consider a randomized version of Frictionless-CHD, as presented in Algorithm~\ref{alg:RHD}. 
In this variant, at each iteration $k$, a coordinate $i \in [d]$ is randomly chosen with probability $p_{i}$ for updating. The update is then performed by executing the Coordinate Hamiltonian Flow on the selected coordinate $i_k \in [d]$ at each iteration $k$, i.e., 
$(x_{k+1}, v_{k+1}) = \hf_{\eta_{k,i_k}}^{(i_k)}(x_{k}, 0).$

\begin{algorithm}[h]
\begin{algorithmic}[1]
\small
\caption{\textsc{Randomized Frictionless Coordinate Hamiltonian Descent}
} \label{alg:RHD}{}
\STATE Input: an initial point $x_{1} \in \reals^{d}$, number of iterations $K$, and a scheme of integration time $\{\eta_{k,i}\}$.
\FOR{$k=1$ to $K$}
\STATE Choose $i_k \in [d]$ with probability $p_{i_k}$.
\STATE Compute $(x_{k+1}, v_{k+1}) = \hf_{\eta_{k,i_k}}^{(i_k)}(x_{k}, 0)$. 
\STATE $\text{//} x_{k+1}[i_k] = \phi_{k,i_k} + \cos \left( \eta_{k,i_k} \sqrt{ A_{i_k,i_k} } \right) \left(x_k[i_k] - \phi_{k,i_k} \right)$, \\ \quad where $\phi_{k,i_k} =  \frac{b[i_k] - \sum_{j \neq i_k}^d A_{i_k,j} x_k[j] }{ A_{i_k,i_k}  }  $. 
\STATE $\text{//} x_{k+1}[j] = x_{k}[j], \text{ for } j \neq i_k $.
\STATE $\text{//} v_{k+1}[i_k] = - \sqrt{ A_{i_k,i_k} } \sin \left( \eta_{k,i_k} \sqrt{ A_{i_k,i_k} } \right) \left(x_k[i_k] - \phi_{k,i_k} \right)$. 
\STATE $\text{//} v_{k+1}[j] =0, \text{ for } j \neq i_k$. 
\ENDFOR
\STATE Return $x_{K+1}$.
\end{algorithmic}
\end{algorithm}

\begin{mdframed}[backgroundcolor=black!10,rightline=false,leftline=false,topline=false,bottomline=false]
\begin{theorem} \label{thm:randomized_d}
Consider Randomized Frictionless Coordinate Hamiltonian Descent (Algorithm~\ref{alg:RHD}). 
Let the integration time $\eta_{k,i}>0$ for any $k$ and $i \in [d]$.
When randomly sampling a coordinate $i \in [d]$ at each iteration $k$ with probability $p_{k,i}  = \frac{A_{i,i}}{  \sin^2(\eta_{k,i}\sqrt{A_{i,i}})}  \frac{1}{G_k}$ to update, 
where $G_k:= \sum_{j=1}^d \frac{ A_{j,j}}{ \sin^2 \left( \eta_{k,j} \sqrt{ A_{j,j} } \right)  }
 $, it satisfies 
$ \E[ f(x_{K+1}) -f(x_*)  ] \leq \left( \Pi_{k=1}^K \left( 1 - \frac{\lambda_{\min}(A)}{G_k} \right) \right) \left( f(x_1) - f(x_*) \right). $
\end{theorem}
\end{mdframed}

\begin{proof}

From Lemma~\ref{lem:coordinate-wise}, the difference between the consecutive iterations satisfies $f(x_k) - f(x_{k+1}) = \frac{1}{2} \left( v_{k+1}[i_k] \right)^2 $, which 
together with the update of $v_{k+1}[i_k] = - \sqrt{ A_{i_k,i_k} } \sin \left( \eta_{k,i_k} \sqrt{ A_{i_k,i_k} } \right) \left(x_k[i_k] - \phi_{k,i_k} \right) $ implies
\begin{align} 
\E_k\left[  f(x_k) - f(x_{k+1})   \right]
& =  \sum_{i=1}^d \frac{1}{2} p_{k,i} A_{i,i} \sin^2 \left( \eta_{k,i} \sqrt{ A_{i,i} } \right) \left(x_k[i] - \phi_{k,i} \right)^2 \notag \\
& = \sum_{i=1}^d \frac{1}{2} p_{k,i} \frac{1}{A_{i,i}} \sin^2 \left( \eta_{k,i} \sqrt{ A_{i,i} } \right) \left( (A x_k)[i] - b[i] \right)^2, \label{77}
\end{align}
where $(A x_k)[i]$ is the $i_{\mathrm{th}}$-element of $A x_k \in \reals^d$ and $\E_k[\cdot]$ is the conditional expectation at $k$. 
Then, from \eqref{77}, we have
\begin{align*}
\E_k\left[  f(x_k) - f(x_{k+1})   \right] = \frac{1}{2 G_k} \| A x_{k} - b \|^2_2,
\end{align*}
where we used the expression of $p_{k,i}$.
Denote $\| x \|_A := \sqrt{ x^\top A x}$.
We have
\begin{equation*}
\frac{1}{2} \| A x_k -b \|^2_2 = \frac{1}{2} \| A x_k - A x_* \|^2_2 \geq  \frac{\lambda_{\min}(A)}{2} \| x_k - x_* \|^2_A 
= \lambda_{\min}(A) ( f(x_k) - f(x_*) )
\end{equation*}
where in the inequality we used $\| x \|_A^2 \leq \| A^{-1} \|_2 \| A x \|^2_2$.
Combing all the above, we get
\begin{align*}
\E_k[ f(x_{k+1}) -f(x_*)  ] \leq \left(1 -  \frac{ \lambda_{\min}(A) }{G_k} \right) \left( f(x_k) - f(x_*) \right),
\end{align*}
which allows us to conclude the the result using the law of total expectation. 
\end{proof}

Since Randomized Coordinate Descent is an instance of Randomized Frictionless Coordinate Hamiltonian Descent, the rate in the above theorem recovers that of Randomized Coordinate Descent \citep{leventhal2010randomized} when $\eta_{k,i_k}= \frac{\pi}{2 \sqrt{A_{i_k,i_k}}}$.

\noindent
\textbf{Block Coordinate Updates:}  Similar to the design of the Frictionless-CHD, we can consider Frictionless Block Coordinate Hamiltonian Descent (Frictionless Block-CHD) based on conducting a block Hamiltonian Flow.
The algorithm is shown on Algorithm~\ref{alg:blockCHD} below. 
When applying Block-CHD to the strongly convex quadratic problems \eqref{quadratic}, the update in each inner iteration $n$ of an outer iteration $k$ is
\begin{align} \label{eq:block}
& x_{k}^{(n)}[ \mathcal{B}_n ] = z^{(n)} + \cos\left( \eta_{k,n} \sqrt{ A^{(n)} }  \right) \left(x_k^{(n-1)} [ \mathcal{B}_n ]   - z^{(n)} \right),
\end{align}
where $z^{(n)}:= ( A^{(n)} )^{-1} b^{(n)}$, and
$| \mathcal{B}_n  |$ is the size of block $\mathcal{B}_n$, 
$A^{(n)} \in \reals^{ | \mathcal{B}_n | \times  | \mathcal{B}_n | }$ is 
the sub-matrix of $A$ indexed by the coordinates in $\mathcal{B}_n$,
$b^{(n)} \in \mathcal{B}_n$ is the sub-vector of $b$ indexed by those in $\mathcal{B}_n$, and $x_{k}^{(n)}[ \mathcal{B}_n ] \in \reals^{| \mathcal{B}_n |}$ is the sub-vector of $x_{k}^{(n)}$ indexed by $\mathcal{B}_n$. 
It is noted that for this variant to work, each sub-matrix $A^{(n)}, n \in [N]$ is assumed to be invertible.

\begin{algorithm}[h]
\begin{algorithmic}[1]
\small
\caption{\textsc{Frictionless Block-Coordinate Hamiltonian Descent (Block-CHD)}
} \label{alg:blockCHD}{}
\STATE Required: splitting the coordinates $[d]$ into $N$ disjoint blocks $\{ \mathcal{B}_n \}_{n=1}^N$. 
\STATE Input: an initial point $x_{1} \in \reals^{d}$, number of iterations $K$, and a scheme of integration time $\{\eta_{k,n}\}$.
\FOR{$k=1$ to $K$}
\STATE $x_k^{(0)} \gets x_k \in \reals^d$.
\FOR{$n=1$ to $N$}
\STATE Compute $(x_{k}^{(n)}, v_{k}^{(n)} ) = \hf_{\eta_{k,n}}^{\mathcal{B}_n}(x_{k}^{(n-1)}, 0)$. 
\STATE  Set $x_{k+1}[i] = x_k^{(n)}[i]$, \quad $\forall i \in \mathcal{B}_n$.
\ENDFOR
\ENDFOR
\STATE Return $x_{K+1}$.
\end{algorithmic}
\end{algorithm}

\begin{mdframed}[backgroundcolor=black!10,rightline=false,leftline=false,topline=false,bottomline=false]
\begin{lemma}
Let $| \mathcal{B}_n  |$ be the size of block $\mathcal{B}_n$, 
$A^{(n)} \in \reals^{ | \mathcal{B}_n | \times  | \mathcal{B}_n | }$ be the sub-matrix of $A$ indexed by the coordinates in $\mathcal{B}_n$, $b^{(n)} \in \mathcal{B}_n$ be the sub-vector of $b$ indexed by those in $\mathcal{B}_n$, and $x_{k}^{(n)}[ \mathcal{B}_n ] \in \reals^{| \mathcal{B}_n |}$ the sub-vector of $x_{k}^{(n)}$ indexed by $\mathcal{B}_n$.
Assume that $A^{(n)}$ is invertible.
Apply Frictionless Block-CHD for the strongly convex quadratic problems \eqref{quadratic}. The update $x_{k}^{(n)}$ in each inner iteration $n$ of an outer iteration $k$ is
$$x_{k}^{(n)}[ \mathcal{B}_n ] = ( A^{(n)} )^{-1} b^{(n)} + \cos\left( \eta_{k,n} \sqrt{ A^{(n)} }  \right) \left(x_k^{(n-1)} [ \mathcal{B}_n ]   - (A^{(n)} )^{-1} b^{(n)} \right),$$
\end{lemma}
\end{mdframed}
\begin{proof}
Observe that when executing $(x_{k}^{(n)}, v_{k}^{(n)} ) = \hf_{\eta_{k,n}}^{\mathcal{B}_n}(x_{k}^{(n-1)}, 0)$, the coordinates that are not in $\mathcal{B}_n$ are fixed and are not updated along the block Hamiltonian Flow,
and hence the system of differential equations \eqref{eq:block} reduces to the following 
$| \mathcal{B}_n|$-dimensional system of equations,
\begin{equation*}
\frac{dx}{d t} = v \quad \text{ and } \quad \frac{d v}{d t} = - A^{(n)} b^{(n)}.
\end{equation*}
Then, by invoking Lemma~\ref{app:1} with $A \gets A^{(n)} $, $b \gets  b^{(n)}$, $\eta \gets \eta_{k,n}$, $x_{\eta} \gets x_{k}^{(n)}[ \mathcal{B}_n ]$,
and $x_0 \gets x_k^{(n-1)} [ \mathcal{B}_n ]$, we obtain the update.
\qed
\end{proof}

We note that Coordinate Descent and its variants, as well as their comparison with Gradient Descent, have been extensively studied in the literature, e.g., \cite{nesterov2012efficiency,gower2015randomized,richtarik2016parallel}. While a coordinate variant generally has a less competitive worst-case convergence rate than that of the full update, it can benefit from a lower iteration cost, for which we refer to the related works and the references therein for an exposition. The purpose of showing the coordinate variants in this section is to demonstrate that the optimization principle of running an energy-conserving Hamiltonian Flow with zero initial velocity and restarts is indeed versatile and powerful.

\section{Parallel Frictionless Coordinate Hamiltonian Descent}~\label{sec:PCo-Hami}

In this section, we consider Parallel Frictionless Coordinate Hamiltonian Descent (Parallel Frictionless-CHD), 
Algorithm~\ref{alg:par}, 
which updates each coordinate simultaneously.

\begin{mdframed}[backgroundcolor=black!10,rightline=false,leftline=false,topline=false,bottomline=false]
\begin{lemma} \label{lem:parallel-chd}
Denote
$\q{k}{i}  := \frac{b_i - \sum_{j \neq i} A_{i,j} x_{k}[j] }{A_{i,i}}$. 
For solving the quadratic problems \eqref{quadratic} via Parallel
Frictionless-CHD (Algorithm~\ref{alg:par}),
at each inner iteration $i$ of an outer iteration $k$, the update 
$(x_{k}^{(i)}, v_{k}^{(i)} ) = \hf_{\eta_{k,i}}^{(i)}(x_{k}, 0)$ is implemented as
\begin{align}
x_{k}^{(i)}[i] & = \q{k}{i} + \cos \left( \eta_{k,i} \sqrt{ A_{i,i} } \right) \left(x_k[i] - \q{k}{i} \right) \label{hihipchd} \\
v_{k}^{(i)}[i] & = - \sqrt{ A_{i,i} } \sin \left( \eta_{k,i} \sqrt{ A_{i,i} } \right) \left(x_k[i] - \q{k}{i} \right), \notag 
\end{align}
and for $j \neq i$, $x_{k}^{(i)}[j]  = x_{k}[j]$ and $v_{k}^{(i)}[j]  =0$.  
\end{lemma}
\end{mdframed}

\begin{proof}

Applying Lemma~\ref{lem:1d} with $\alpha \gets A_{i,i}$, $\beta \gets b[i] - \sum_{j\neq i}^d A_{i,j} x[j]$,  $x \gets x_{k}$, $ t \gets \eta_{k,i}$ leads to the result. 

\qed
\end{proof}

We now compare the update \eqref{hihipchd} with the Jacobi method and the weighted Jacobi method,
which are also classical methods for solving systems of equations \citep{hackbusch1994iterative,quarteroni2006numerical,iserles2009first}.
For the Jacobi method, at each iteration $k$, the update is:
\begin{align*} 
& \textbf{The Jacobi method:}  \quad
 x_{k+1}[i]  = \q{k}{i} =  \frac{b_i - \sum_{j \neq i} A_{i,j} x_{k}[j] }{A_{i,i}}, \forall i \in [d], 
\intertext{while the weighted Jacobi method has the following update:} 
& \textbf{The weighted Jacobi method:} \quad 
 x_{k+1}[i]  = c \, \q{k}{i} + (1-c) x_k[i], \forall i \in [d]. 
\end{align*}
It is evident that the Jacobi method and the weighted Jacobi method are both instances of 
Parallel Frictionless-CHD, with different values of the integration time $\eta_{k,i}$.
This result can serve as an interesting counterpart of Gauss-Seidel and the Successive Over-relaxation as Frictionless-CHD in the previous section.

\begin{mdframed}[backgroundcolor=black!10,rightline=false,leftline=false,topline=false,bottomline=false]
\begin{theorem}
The Jacobi method is Parallel Frictionless-CHD (Algorithm~\ref{alg:par}) with the integration time $\eta_{k,i}=\frac{1}{\sqrt{A_{i,i}}} \sin^{-1}(1) $, 
while the weighted Jacobi method is Parallel Frictionless-CHD with the integration time
$\eta_{k,i} = \frac{1}{\sqrt{A_{i,i}}} \cos^{-1}(1-c) $, where $c$ is a parameter.
\end{theorem}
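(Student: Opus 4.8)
The plan is to read both claims directly off the closed-form update of Parallel CHD established in Lemma~\ref{lem:parallel-chd}, since the two prescribed integration times are engineered precisely so that the cosine factor in \eqref{hihipchd} collapses to the multiplier appearing in the Jacobi and weighted Jacobi iterations. Concretely, Lemma~\ref{lem:parallel-chd} gives $x_{k+1}[i] = x_{k}^{(i)}[i] = \q{k}{i} + \cos(\eta_{k,i}\sqrt{A_{i,i}})(x_k[i] - \q{k}{i})$, and crucially that $\q{k}{i}$ is built entirely from the \emph{current} iterate $x_k$ (each $j \neq i$ contributes $x_k[j]$, with no partially updated coordinate). I would flag this last point at the outset, since it is exactly what aligns the parallel scheme with the Jacobi family—where $\q{k}{i}$ replaces the sequentially-updated $\p{k}{i}$ of the Gauss-Seidel/CHD case—making the comparison with \eqref{jacob} and \eqref{sor} unambiguous.

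For the Jacobi claim I would substitute $\eta_{k,i} = \frac{\pi}{2}\frac{1}{\sqrt{A_{i,i}}}$, so that the argument of the cosine is $\eta_{k,i}\sqrt{A_{i,i}} = \pi/2$ and hence $\cos(\eta_{k,i}\sqrt{A_{i,i}}) = 0$. The second term in \eqref{hihipchd} vanishes and the update reduces to $x_{k+1}[i] = \q{k}{i}$, which is exactly the Jacobi iteration \eqref{jacob}. Since this holds for every $i \in [d]$ and all coordinates are updated independently from $x_k$, one full inner sweep coincides with one Jacobi step.

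For the weighted Jacobi claim I would substitute $\eta_{k,i} = \frac{1}{\sqrt{A_{i,i}}}\cos^{-1}(1-c)$, which gives $\cos(\eta_{k,i}\sqrt{A_{i,i}}) = 1-c$; plugging this into \eqref{hihipchd} and expanding yields
\begin{equation}
x_{k+1}[i] = \q{k}{i} + (1-c)(x_k[i] - \q{k}{i}) = c\,\q{k}{i} + (1-c)x_k[i],
\end{equation}
which is precisely the weighted Jacobi update \eqref{sor}. The only point requiring a word of care is well-definedness of $\cos^{-1}(1-c)$: it needs $1-c \in [-1,1]$, i.e., $c \in [0,2]$, which is compatible with the stated parameter range, so no genuine obstacle arises. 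Overall I expect no substantive difficulty—this is a verification that two prescribed choices of integration time specialize the Parallel CHD update of Lemma~\ref{lem:parallel-chd} to the two classical iterations, mirroring the earlier identification of Gauss-Seidel and Successive Over-relaxation as instances of CHD. The only ``hard part'' is bookkeeping: ensuring the cosine's argument is read as $\eta_{k,i}\sqrt{A_{i,i}}$ and that $\q{k}{i}$ (rather than $\p{k}{i}$) is the operative quantity, so the simultaneous nature of the update is respected throughout.
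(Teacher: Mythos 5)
Your proposal is correct and follows exactly the route the paper takes (implicitly, since the paper presents this as evident from comparing the displayed updates): substitute the two integration times into the closed-form Parallel CHD update of Lemma~\ref{lem:parallel-chd} so that the cosine factor becomes $0$ or $1-c$, recovering \eqref{jacob} and \eqref{sor} respectively. Your added remarks on $\q{k}{i}$ versus $\p{k}{i}$ and on the well-definedness of $\cos^{-1}(1-c)$ are sound but not needed beyond what the paper already establishes.
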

\end{mdframed}

\begin{algorithm}[t]
\begin{algorithmic}[1]
\small
\caption{\textsc{Parallel Frictionless Coordinate Hamiltonian Descent
}
} \label{alg:par}{}
\STATE Input: an initial point $x_{1} \in \reals^{d}$, number of iterations $K$, and a scheme of integration time $\{\eta_{k,i} \}$.
\FOR{$k=1$ to $K$}
\STATE \text{//} Each update $i \in [d]$ in the inner loop can be executed in parallel. 
\FOR{$i=1$ to $d$}  
\STATE \quad Compute $(x_{k}^{(i)}, v_{k}^{(i)} ) = \hf_{\eta_{k,i}}^{(i)}(x_{k}, 0)$. 
\STATE \quad Set $x_{k+1}[i] = x_k^{(i)}[i]$.
\ENDFOR
\ENDFOR
\STATE Return $x_{K+1}$.
\end{algorithmic}
\end{algorithm}

However, unlike CHD and HD, Parallel CHD may not decrease the objective vale at each outer iteration $k$ even if the size of the $v_{k}^{(i)}$ is non-zero, and it may not converge to the optimal point $x_*$. Nevertheless, 
Theorem~\ref{thm:parallel-chd} identifies a condition under which Parallel Frictionless-CHD converges.

\begin{mdframed}[backgroundcolor=black!10,rightline=false,leftline=false,topline=false,bottomline=false]
\begin{theorem} \label{thm:parallel-chd}
Let $\eta_{k,i}=\eta_i  \neq \frac{1}{\sqrt{A_{i,i}}} \sin^{-1}(0) $. 
If the underlying matrix $A$ of the quadratic problem \eqref{quadratic} satisfies 
$ \left| A_{i,i} \left( 1 + \frac{2 \cos(\eta_i \sqrt{A_{i,i}})}{1-\cos(\eta_i \sqrt{A_{i,i}})} \right) \right| > \sum_{j \neq i} \left| A_{i,j} \right|, \forall i \in [d].$
Then,
Parallel Frictionless-CHD (Algorithm~\ref{alg:par}) converges 
at an asymptotic rate $\rho\left( I_d - (I_d - M) D^{-1} A\right) < 1$,
where $\rho(\cdot)$ denotes the spectral radius of the underlying matrix, and
$M$
is a diagonal matrix
with $cos \left( \eta_{i} \sqrt{ A_{i,i} } \right)$ on its $i_{\mathrm{th}}$ element.
\end{theorem}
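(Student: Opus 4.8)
The plan is to recast the Parallel CHD iteration as a stationary linear recursion $x_{k+1} = T x_k + w$ and then prove $\rho(T) < 1$, which is precisely the claimed asymptotic rate. Write $D := \mathrm{diag}(A_{1,1},\dots,A_{d,d})$ and let $c_i := \cos(\eta_i \sqrt{A_{i,i}})$ denote the $i_{\mathrm{th}}$ diagonal entry of $M$. By Lemma~\ref{lem:parallel-chd}, each coordinate updates as $x_{k+1}[i] = (1-c_i)\,\zeta_{k,i} + c_i\, x_k[i]$, where $\zeta_{k,i} = \bigl(D^{-1}(b - (A-D)x_k)\bigr)_i$. Collecting these $d$ scalar updates into vector form and simplifying gives $x_{k+1} = T x_k + (I_d - M)D^{-1} b$ with $T = I_d - (I_d - M)D^{-1}A$. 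Since the hypothesis $\eta_i \neq \frac{1}{\sqrt{A_{i,i}}}\arccos(\pm 1)$ forces $c_i \in (-1,1)$, the matrix $I_d - M$ is invertible, so any fixed point $x_\infty$ satisfies $(I_d - M)D^{-1}(Ax_\infty - b) = 0$, i.e.\ $A x_\infty = b$, hence $x_\infty = x_* = A^{-1}b$. Thus convergence to $x_*$ reduces to showing $\rho(T) < 1$.

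The key structural step is to recognize that $N := (I_d - M)D^{-1}A$ (so that $T = I_d - N$) has only real, strictly positive eigenvalues. Indeed, $S := (I_d - M)D^{-1}$ is a diagonal matrix with entries $s_i = (1-c_i)/A_{i,i} > 0$ because $c_i \in (-1,1)$ and $A_{i,i} > 0$ (as $A \succ 0$). Therefore $N = SA$ is the product of a positive diagonal matrix and a symmetric positive definite matrix, and the similarity $S^{-1/2} N S^{1/2} = S^{1/2} A S^{1/2}$ exhibits $N$ as similar to an SPD matrix. Consequently every eigenvalue $\mu$ of $N$ is real and positive, so the eigenvalues of $T = I_d - N$ are the real numbers $1-\mu$. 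Hence $\rho(T) < 1$ holds if and only if every eigenvalue $\mu$ of $N$ lies in $(0,2)$; positivity is automatic, so it remains only to establish $\mu_{\max}(N) < 2$.

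To bound the top eigenvalue I would invoke Gershgorin. Because the eigenvalues of $N$ are real, they lie in the union of the real intervals $[N_{ii} - R_i,\, N_{ii} + R_i]$, where a direct computation gives $N_{ii} = s_i A_{i,i} = 1 - c_i$ and $R_i = \sum_{j\neq i} |N_{ij}| = (1-c_i)\frac{\sum_{j\neq i}|A_{i,j}|}{A_{i,i}}$. Hence $\mu_{\max}(N) \le \max_i (N_{ii} + R_i)$, and the requirement $N_{ii} + R_i < 2$ for each $i$ rearranges to $(1-c_i)\sum_{j\neq i}|A_{i,j}| < (1+c_i)A_{i,i}$, that is, $\sum_{j\neq i}|A_{i,j}| < A_{i,i}\,\frac{1+c_i}{1-c_i} = A_{i,i}\bigl(1 + \frac{2c_i}{1-c_i}\bigr)$. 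Since $A_{i,i} > 0$ and $1 \pm c_i > 0$, the right-hand side equals $\bigl|A_{i,i}(1 + \frac{2c_i}{1-c_i})\bigr|$, so this is exactly the stated hypothesis. Therefore $\mu_{\max}(N) < 2$, giving $\rho(T) < 1$ and the announced rate.

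I expect the main obstacle to be the symmetrization observation rather than the Gershgorin estimate. A priori $T$ is non-normal and could have complex eigenvalues, in which case a naive Gershgorin bound on $T$ itself would only yield $|c_i| + R_i < 1$, which is strictly stronger than the theorem's condition and in fact fails to recover it whenever $c_i > 0$. The crucial insight is that $(I_d - M)D^{-1}A$ is similar to an SPD matrix, which confines all eigenvalues to the positive real axis; this is what makes the one-sided Gershgorin bound sharp enough and collapses the whole problem to the single scalar inequality $\mu_{\max}(N) < 2$.
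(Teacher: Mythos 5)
Your proof is correct, and it reaches the theorem by a genuinely different route than the paper. Both of you derive the same iteration matrix $T = I_d - (I_d - M)D^{-1}A$ from Lemma~\ref{lem:parallel-chd}, but from there the paper invokes a cited black-box result (Theorem 10.2 in Iserles, a Householder--John-type criterion: if $A, B \succ 0$ and $B^{-1} + (B^\top)^{-1} - A \succ 0$ then $\rho(I - BA) < 1$), sets $B = (I_d - M)D^{-1}$, and verifies the resulting condition $2DN - A \succ 0$ (with $N = (I_d-M)^{-1}$) by checking that this matrix is strictly diagonally dominant with positive diagonal. You instead give a self-contained spectral argument: the similarity $S^{-1/2}(SA)S^{1/2} = S^{1/2}AS^{1/2}$ with $S = (I_d - M)D^{-1} \succ 0$ diagonal shows the eigenvalues of $(I_d-M)D^{-1}A$ are real and positive, so it suffices to bound them below $2$, which Gershgorin applied to $SA$ delivers exactly under the stated hypothesis. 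The two verification steps are in fact equivalent (your Gershgorin condition $N_{ii} + R_i < 2$ is row-by-row the paper's diagonal dominance of $2DN - A$, after rescaling each row by the positive factor $A_{i,i}/(1 - \cos(\eta_i\sqrt{A_{i,i}}))$), but your route has the advantage of not needing the external convergence theorem and of making explicit \emph{why} the condition is one-sided: the eigenvalues are confined to the positive real axis, so only the upper Gershgorin endpoint matters. One cosmetic caveat you share with the paper: the hypothesis $\eta_i \neq \frac{1}{\sqrt{A_{i,i}}}\arccos(\pm 1)$ should really be read as $\cos(\eta_i\sqrt{A_{i,i}}) \neq \pm 1$ (excluding all multiples of $\pi$, not just $0$ and $\pi$); you interpret it that way, as does the paper, so this is not a gap in your argument.
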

\end{mdframed}

The proof of Theorem~\ref{thm:parallel-chd} requires the following result.
\begin{mdframed}[backgroundcolor=black!10,rightline=false,leftline=false,topline=false,bottomline=false]
\begin{theorem} \label{thm:converge2} (e.g., Theorem 10.2 in \cite{iserles2009first})
Consider an iterative system,
$\delta_{k+1} = ( I - B A ) \delta_k$.
Suppose $A, B \succ 0$.
If $B^{-1} + (B^\top)^{-1} - A \succ 0$, then $\delta_k$ converges to zero
at an asymptotic rate $\rho (I-B A)<1$.
\end{theorem}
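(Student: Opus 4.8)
The plan is to read the statement spectrally and prove it by a Lyapunov (energy) argument in which $A$ itself plays the role of the Lyapunov matrix. Writing $G := I - BA$ for the iteration matrix, we have $\delta_{k+1} = G\delta_k$, hence $\delta_k = G^{k-1}\delta_1$; thus $\delta_k \to 0$ for every initial $\delta_1$ if and only if $\rho(G) < 1$, and in that case $\rho(G)$ is exactly the asymptotic rate. So the entire task reduces to showing $\rho(I - BA) < 1$. I will also note at the outset that, since $B^{-1} + (B^\top)^{-1}$ is symmetric, the hypothesis is most naturally read with $A = A^\top$, and I treat $A$ as symmetric positive definite, as in the application (Theorem~\ref{thm:parallel-chd}).

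The crux is a single matrix identity that ties the iteration matrix to the hypothesis:
$$A - G^\top A G = (BA)^\top \left( B^{-1} + (B^\top)^{-1} - A \right) (BA).$$
I would derive this by direct expansion: using $A = A^\top$ gives $G^\top = I - AB^\top$, and multiplying out $(I - AB^\top)A(I - BA)$ collects to $A - G^\top A G = A\left(B + B^\top - B^\top A B\right)A$. The inner factor then rewrites as $B + B^\top - B^\top A B = B^\top\left(B^{-1} + (B^\top)^{-1} - A\right)B$, since $B^\top B^{-1} B = B^\top$ and $B^\top (B^\top)^{-1} B = B$, and finally $AB^\top = (BA)^\top$ packages everything into the displayed congruence form. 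Getting the ordering of $B$, $B^\top$, and the two outer congruence factors exactly right is the one delicate point; the rest is bookkeeping, and this algebraic identity is the step I expect to be the main obstacle.

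With the identity in hand the conclusion is routine. Because $A,B \succ 0$ are invertible, the factor $BA$ is invertible, and congruence by an invertible matrix preserves positive definiteness; combined with the hypothesis $B^{-1} + (B^\top)^{-1} - A \succ 0$ this yields $A - G^\top A G \succ 0$. This is precisely the statement that the energy norm $\|x\|_A := \sqrt{x^\top A x}$ strictly contracts under $G$, i.e. $\|Gx\|_A < \|x\|_A$ for every $x \neq 0$; by compactness of the $A$-unit sphere the induced operator norm satisfies $\|G\|_A < 1$, and since the spectral radius is bounded by any induced norm, $\rho(G) \le \|G\|_A < 1$. Equivalently, one can finish in one line by complexification: for any eigenpair $(\lambda,v)$ of $G$, the identity gives $0 < v^*(A - G^\top A G)v = (1 - |\lambda|^2)\, v^* A v$ with $v^* A v > 0$, forcing $|\lambda| < 1$. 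Either way $\rho(I - BA) < 1$, so $\delta_k \to 0$ at the claimed asymptotic rate.
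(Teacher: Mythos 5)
The paper does not prove this statement itself; it is quoted from \cite{iserles2009first} (Theorem 10.2, the Householder--John criterion), so there is no internal proof to compare against. Your argument is correct and is essentially the standard textbook proof of that result: the congruence identity $A - G^\top A G = (BA)^\top\bigl(B^{-1} + (B^\top)^{-1} - A\bigr)(BA)$ checks out (using $A=A^\top$, which is the intended reading since the application takes $A$ symmetric positive definite and $B$ diagonal), and either of your two closing steps --- the induced $\|\cdot\|_A$-norm contraction or the complexified eigenpair computation --- validly yields $\rho(I-BA)<1$.
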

\end{mdframed}

\begin{proof}[of Theorem~\ref{thm:parallel-chd}]
Using Lemma~\ref{lem:parallel-chd}, we know that
\begin{align*}
x_{k}^{(i)}[i] =  \q{k}{i} + \cos \left( \eta_{k,i} \sqrt{ A_{ii} } \right) \left(x_k^{(i-1)}[i] - \q{k}{i} \right)
\end{align*}

Denote $L$ as the strictly lower triangular part, $U$ as the strictly upper triangular part,
and $D$ as the diagonal part of the matrix $A$.
We can write the update of $x_{k+1}$ in terms of $L,U,D$ as: 
\begin{align} \label{54}
x_{k+1} = D^{-1} \left( b - L x_{k} - U x_k   \right) + M_k \left( x_k - D^{-1} \left( b - L x_{k} - U x_k   \right)  \right),
\end{align}
where 
$M_k := \text{Diag}\left( \cos \left( \eta_{k,1} \sqrt{ A_{1,1} } \right) ,\dots, \cos \left(  \eta_{k,d} \sqrt{ A_{d,d} } \right)\right)$. Using $b = A x_*$, we can further simplify \eqref{54} to
\begin{align*}
x_{k+1}  = & D^{-1} \left( A x_* - (A-D) x_k   \right) + M_k \left( x_k - D^{-1} \left( A x_* - (A-D) x_k   \right)  \right) \notag
\\  = & D^{-1} A (x_* -x_k ) + x_k - M_k D^{-1} A (x_* -x_k ) \notag \\
\iff
x_{k+1} - x_* & =   (I_d - D^{-1} A + M_k D^{-1} A ) (x_k - x_*). 
\end{align*}
Now, let $\eta_{k,i} = \eta_i \neq \frac{1}{\sqrt{A_{i,i}}} \sin^{-1}(0)$ be a constant.
Then, we have
$M_k = M:= \mathrm{Diag}\left( \cos \left( \eta_{1} \sqrt{ A_{1,1} } \right) ,\dots, \cos \left( \eta_{d} \sqrt{ A_{d,d} } \right)\right).$

To invoke Theorem~\ref{thm:converge2} with $B \leftarrow (I_d-M) D^{-1} \succ 0$, it suffices to identify conditions such that 
\begin{equation} \label{q}
\left(  (I_d-M) D^{-1} \right)^{-1} + \left( \left(  (I_d-M) D^{-1} \right)^\top \right)^{-1} - A \succ 0. 
\end{equation}
Let 
\begin{align*}
N &:= (I_d - M)^{-1} = \mathrm{Diag}\left( \frac{1}{1-\cos(\eta_1 \sqrt{A_{1,1}})}, \dots, \frac{1}{1-\cos(\eta_d \sqrt{A_{d,d}})} \right) \notag
\\ & = 
\mathrm{Diag}\left( 1 + \frac{\cos(\eta_1 \sqrt{A_{1,1}})}{1-\cos(\eta_1 \sqrt{A_{1,1}})}, \dots, 1 + \frac{\cos(\eta_d \sqrt{A_{d,d}})}{1-\cos(\eta_d \sqrt{A_{d,d}})} \right). \notag
\end{align*}
Then, the inequality \eqref{q} is equivalent to 
$2 D N - A \succ 0.$
If the matrix $2 D N - A$ is strictly diagonal-dominant and its diagonal elements are positive, then $2 D N - A \succ 0$.
The condition of being strictly diagonally-dominant 
is $ \left| A_{i,i} \left( 1 + \frac{ 2 \cos(\eta_i \sqrt{A_{i,i}})}{1-\cos(\eta_i \sqrt{A_{i,i}})} \right) \right| > \sum_{j \neq i} \left| A_{i,j} \right|, \forall i \in [d].$
Also, the condition that $\cos \left( \eta_{i} \sqrt{ A_{i,i} } \right) > -1, \forall i \in [d]$
guarantees that the diagonal elements of $2 D N - A$ are positive. 
We can now invoke Theorem~\ref{thm:converge2} to complete the proof. 

\end{proof}

In the literature, it is known that a sufficient condition for the Jacobi method to converge is when $A$ is strictly diagonally-dominant, i.e.,
  $\left| A_{i,i} \right| > \sum_{j \neq i} \left| A_{i,j} \right|, \forall i \in [d]$, see e.g., Section 4.4.2 in \cite{hackbusch1994iterative}.  
The sufficient condition of the convergence in Theorem~\ref{thm:parallel-chd} for Parallel CHD 
suggests that by choosing each $\eta_i \neq \frac{1}{\sqrt{A_{i,i}}}  \sin^{-1}(0)$ such that $\cos(\eta_i \sqrt{A_{i,i}})>0$, 
it leads to a weaker (sufficient) condition
than that of the Jacobi method.  
Hence, Parallel Frictionless-CHD not only unifies the Jacobi method and the weighted Jacobi method (both are instances of Frictionless-CHD) but also introduces new update schemes that may be more applicable than the existing ones.

\section{Conclusion} \label{outlooks}

In this paper, we introduce \textit{Frictionless Hamiltonian Descent} and analyze it for strongly convex quadratic functions. We show that Frictionless-HD attains a nontrivial acceleration rate analogous to that of Heavy Ball flow with resets in the context of strongly convex quadratic problems.  
Additionally, we propose \textit{Frictionless Coordinate Hamiltonian Descent} and its parallelizable variant and establish their connections with classical iterative methods for solving linear systems, including the Gauss-Seidel method, Successive Over-Relaxation, and the Jacobi method.  
Looking forward, we note that a key challenge in implementing the Hamiltonian Flow for \emph{general} functions arises from the fact that Hamiltonian equations do not typically admit simple analytical or closed-form solutions. This might necessitate the use of numerical integrators to approximate the Hamiltonian Flow, particularly in non-quadratic cases \citep{Hairer2006}. An elegant approach proposed in \cite{boyd2024optimization}, leveraging computer-assisted discretization, could prove valuable for simulating Frictionless-HD.  
Moreover, a wealth of well-established techniques and results exists in the literature on Hamiltonian Monte Carlo (HMC) methods for sampling \citep{HG14,chen2020fast,SG21,lee2021lower,kook2022sampling,monmarche2022hmc,chen2023does,bou2023mixing,noble2024unbiased,bou2024gist}. Given that Frictionless-HD can be viewed as a direct counterpart of HMC in optimization, investigating whether these results extend to Frictionless-HD may facilitate the development of novel optimization techniques.

\section*{Acknowledgements}
The author appreciate the support from NSF CCF-2403392 and also thank Andre Wibisono for helpful discussions.

\bibliographystyle{plainnat}
\bibliography{main}

\begin{thebibliography}{84}
\providecommand{\natexlab}[1]{#1}
\providecommand{\url}[1]{\texttt{#1}}
\expandafter\ifx\csname urlstyle\endcsname\relax
  \providecommand{\doi}[1]{doi: #1}\else
  \providecommand{\doi}{doi: \begingroup \urlstyle{rm}\Url}\fi

\bibitem[Agarwal et~al.(2021)Agarwal, Goel, and Zhang]{AGZ21}
Naman Agarwal, Surbhi Goel, and Cyril Zhang.
\newblock Acceleration via fractal learning rate schedules.
\newblock \emph{ICML}, 2021.

\bibitem[Akande et~al.(2024)Akande, Dondl, Gupta, Onwunta, and
  Wojtowytsch]{akande2024momentum}
Oluwatosin Akande, Patrick Dondl, Kanan Gupta, Akwum Onwunta, and Stephan
  Wojtowytsch.
\newblock Momentum-based minimization of the {Ginzburg-Landau} functional on
  {Euclidean} spaces and graphs.
\newblock \emph{arXiv preprint arXiv:2501.00389}, 2024.

\bibitem[Alamo et~al.(2022)Alamo, Krupa, and Limon]{alamo2022restart}
Teodoro Alamo, Pablo Krupa, and Daniel Limon.
\newblock Restart of accelerated first-order methods with linear convergence
  under a quadratic functional growth condition.
\newblock \emph{IEEE Transactions on Automatic Control}, 68\penalty0
  (1):\penalty0 612--619, 2022.

\bibitem[Apidopoulos et~al.(2021)Apidopoulos, Ginatta, and Villa]{AGV21}
Vassilis Apidopoulos, Nicolo Ginatta, and Silvia Villa.
\newblock Convergence rates for the {Heavy-Ball} continuous dynamics for
  non-convex optimization, under {Polyak-\L{}ojasiewicz} conditioning.
\newblock \emph{arXiv:2107.10123}, 2021.

\bibitem[Arnold(1989)]{Arnold1989}
V.~I. Arnold.
\newblock \emph{Mathematical methods of classical mechanics}.
\newblock Springer, 1989.

\bibitem[Attouch et~al.(2018)Attouch, Chbani, Peypouquet, and
  Redont]{AttouchChbaniPeypouquetRedont2018_fast}
Hedy Attouch, Zaki Chbani, Juan Peypouquet, and Patrick Redont.
\newblock Fast convergence of inertial dynamics and algorithms with asymptotic
  vanishing viscosity.
\newblock \emph{Mathematical Programming}, 168\penalty0 (1):\penalty0 123--175,
  2018.

\bibitem[Attouch et~al.(2022)Attouch, Chbani, Fadili, and
  Riahi]{attouch2022first}
Hedy Attouch, Zaki Chbani, Jalal Fadili, and Hassan Riahi.
\newblock First-order optimization algorithms via inertial systems with
  {Hessian} driven damping.
\newblock \emph{Mathematical Programming}, pages 1--43, 2022.

\bibitem[Aujol et~al.(2023)Aujol, Dossal, and
  Rondepierre]{aujol2023convergence}
J-F Aujol, Ch~Dossal, and Aude Rondepierre.
\newblock Convergence rates of the {Heavy-Ball} method under the
  {{\L}}ojasiewicz property.
\newblock \emph{Mathematical Programming}, 198\penalty0 (1):\penalty0 195--254,
  2023.

\bibitem[Beck and Tetruashvili(2013)]{beck2013convergence}
Amir Beck and Luba Tetruashvili.
\newblock On the convergence of block coordinate descent type methods.
\newblock \emph{SIAM journal on Optimization}, 23\penalty0 (4):\penalty0
  2037--2060, 2013.

\bibitem[Betancourt et~al.(2018)Betancourt, Jordan, and
  Wilson]{betancourt2018symplectic}
Michael Betancourt, Michael~I Jordan, and Ashia~C Wilson.
\newblock On symplectic optimization.
\newblock \emph{arXiv preprint arXiv:1802.03653}, 2018.

\bibitem[Bo{\c t} and Csetnek(2018)]{BotCsetnek2018_convergence}
Radu~Ioan Bo{\c t} and Ern{\"o}~Robert Csetnek.
\newblock Convergence rates for forward--backward dynamical systems associated
  with strongly monotone inclusions.
\newblock \emph{Journal of Mathematical Analysis and Applications},
  457\penalty0 (2):\penalty0 1135--1152, 2018.

\bibitem[Bou-Rabee and Eberle(2023)]{bou2023mixing}
Nawaf Bou-Rabee and Andreas Eberle.
\newblock Mixing time guarantees for unadjusted {Hamiltonian Monte Carlo}.
\newblock \emph{Bernoulli}, 29\penalty0 (1):\penalty0 75--104, 2023.

\bibitem[Bou-Rabee et~al.(2024)Bou-Rabee, Carpenter, and Marsden]{bou2024gist}
Nawaf Bou-Rabee, Bob Carpenter, and Milo Marsden.
\newblock Gist: {Gibbs} self-tuning for locally adaptive {Hamiltonian Monte
  Carlo}.
\newblock \emph{arXiv preprint arXiv:2404.15253}, 2024.

\bibitem[Boyd et~al.(2024)Boyd, Parshakova, Ryu, and Suh]{boyd2024optimization}
Stephen~P Boyd, Tetiana Parshakova, Ernest~K Ryu, and Jaewook~J Suh.
\newblock Optimization algorithm design via electric circuits.
\newblock \emph{NeurIPS}, 2024.

\bibitem[Carpenter et~al.(2017)Carpenter, Gelman, Hoffman, Lee, Goodrich,
  Betancourt, Brubaker, Guo, Li, and Riddell]{carpenter2017stan}
Bob Carpenter, Andrew Gelman, Matthew~D Hoffman, Daniel Lee, Ben Goodrich,
  Michael Betancourt, Marcus~A Brubaker, Jiqiang Guo, Peter Li, and Allen
  Riddell.
\newblock Stan: A probabilistic programming language.
\newblock \emph{Journal of statistical software}, 76, 2017.

\bibitem[Chen and Gatmiry(2023)]{chen2023does}
Yuansi Chen and Khashayar Gatmiry.
\newblock When does {Metropolized} {Hamiltonian Monte Carlo} provably
  outperform {Metropolis}-adjusted {Langevin} algorithm?
\newblock \emph{arXiv preprint arXiv:2304.04724}, 2023.

\bibitem[Chen et~al.(2020)Chen, Dwivedi, Wainwright, and Yu]{chen2020fast}
Yuansi Chen, Raaz Dwivedi, Martin~J Wainwright, and Bin Yu.
\newblock Fast mixing of {Metropolized} {Hamiltonian Monte Carlo}: Benefits of
  multi-step gradients.
\newblock \emph{The Journal of Machine Learning Research}, 21\penalty0
  (1):\penalty0 3647--3717, 2020.

\bibitem[d'Aspremont et~al.(2021)d'Aspremont, Scieur, and Taylor]{DST21}
Alexandre d'Aspremont, Damien Scieur, and Adrien Taylor.
\newblock Acceleration methods.
\newblock \emph{Foundations and Trends in Optimization}, 2021.

\bibitem[De~Luca and Silverstein(2022)]{de2022born}
Giuseppe~Bruno De~Luca and Eva Silverstein.
\newblock Born-infeld ({BI}) for {AI}: energy-conserving descent ({ECD}) for
  optimization.
\newblock In \emph{International Conference on Machine Learning}, pages
  4918--4936. PMLR, 2022.

\bibitem[Diakonikolas and Jordan(2021)]{diakonikolas2021generalized}
Jelena Diakonikolas and Michael~I Jordan.
\newblock Generalized momentum-based methods: A {Hamiltonian} perspective.
\newblock \emph{SIAM Journal on Optimization}, 31\penalty0 (1):\penalty0
  915--944, 2021.

\bibitem[Diakonikolas and Orecchia(2019)]{DO19}
Jelena Diakonikolas and Lorenzo Orecchia.
\newblock The approximate duality gap technique: A unified theory of
  first-order methods.
\newblock \emph{SIAM Journal on Optimization}, 29\penalty0 (1):\penalty0
  660--689, 2019.

\bibitem[Dixit et~al.(2023)Dixit, Gurbuzbalaban, and
  Bajwa]{dixit2023accelerated}
Rishabh Dixit, Mert Gurbuzbalaban, and Waheed~U Bajwa.
\newblock Accelerated gradient methods for nonconvex optimization: Escape
  trajectories from strict saddle points and convergence to local minima.
\newblock \emph{arXiv preprint arXiv:2307.07030}, 2023.

\bibitem[Duane et~al.(1987)Duane, Kennedy, Pendleton, and Roweth]{Duane87}
Simon Duane, A.~D. Kennedy, Brian~J. Pendleton, and Duncan Roweth.
\newblock {Hybrid Monte Carlo}.
\newblock \emph{Physics Letters B}, 1987.

\bibitem[Ernst~Hairer(2006)]{Hairer2006}
Christian~Lubich Ernst~Hairer, Gerhard~Wanner.
\newblock \emph{Geometric Numerical Integration}.
\newblock Springer, 2006.

\bibitem[Even et~al.(2021)Even, Berthier, Bach, Flammarion, Gaillard, Hendrikx,
  Massouli{\'e}, and Taylor]{even2021continuized}
Mathieu Even, Rapha{\"e}l Berthier, Francis Bach, Nicolas Flammarion, Pierre
  Gaillard, Hadrien Hendrikx, Laurent Massouli{\'e}, and Adrien Taylor.
\newblock A continuized view on {N}esterov acceleration for stochastic gradient
  descent and randomized gossip.
\newblock \emph{arXiv preprint arXiv:2106.07644}, 2021.

\bibitem[Flammarion and Bach(2015)]{NB15}
Nicolas Flammarion and Francis Bach.
\newblock From averaging to acceleration, there is only a step-size.
\newblock \emph{COLT}, 2015.

\bibitem[Fran{\c c}a et~al.(2018)Fran{\c c}a, Robinson, and
  Vidal]{FrancaRobinsonVidal2018_admm}
Guilherme Fran{\c c}a, Daniel Robinson, and Ren{\'e} Vidal.
\newblock {{ADMM}} and accelerated {{ADMM}} as continuous dynamical systems.
\newblock \emph{International Conference on Machine Learning}, 2018.

\bibitem[Fran{\c{c}}a et~al.(2020)Fran{\c{c}}a, Sulam, Robinson, and
  Vidal]{francca2020conformal}
Guilherme Fran{\c{c}}a, Jeremias Sulam, Daniel Robinson, and Ren{\'e} Vidal.
\newblock Conformal symplectic and relativistic optimization.
\newblock \emph{Advances in Neural Information Processing Systems},
  33:\penalty0 16916--16926, 2020.

\bibitem[Fran{\c c}a et~al.(2021)Fran{\c c}a, Jordan, and
  Vidal]{FrancaJordanVidal2021_dissipative}
Guilherme Fran{\c c}a, Michael~I Jordan, and Ren{\'e} Vidal.
\newblock On dissipative symplectic integration with applications to
  gradient-based optimization.
\newblock \emph{Journal of Statistical Mechanics: Theory and Experiment},
  2021\penalty0 (4):\penalty0 043402, 2021.

\bibitem[Fu and Wibisono(2025)]{fu2025hamiltonian}
Qiang Fu and Andre Wibisono.
\newblock Hamiltonian descent algorithms for optimization: Accelerated rates
  via randomized integration time.
\newblock \emph{arXiv preprint arXiv:2505.12553}, 2025.

\bibitem[Golub and Varga(1961)]{golub1961chebyshev}
Gene~H Golub and Richard~S Varga.
\newblock Chebyshev semi-iterative methods, successive overrelaxation iterative
  methods, and second order richardson iterative methods.
\newblock \emph{Numerische Mathematik}, 3\penalty0 (1):\penalty0 157--168,
  1961.

\bibitem[Goujaud et~al.(2022)Goujaud, Scieur, Dieuleveut, Taylor, and
  Pedregosa]{goujaud2022super}
Baptiste Goujaud, Damien Scieur, Aymeric Dieuleveut, Adrien~B Taylor, and
  Fabian Pedregosa.
\newblock Super-acceleration with cyclical step-sizes.
\newblock In \emph{International Conference on Artificial Intelligence and
  Statistics}, pages 3028--3065. PMLR, 2022.

\bibitem[Gower and Richt{\'a}rik(2015)]{gower2015randomized}
Robert~M Gower and Peter Richt{\'a}rik.
\newblock Randomized iterative methods for linear systems.
\newblock \emph{SIAM Journal on Matrix Analysis and Applications}, 36\penalty0
  (4):\penalty0 1660--1690, 2015.

\bibitem[Greiner(2003)]{greiner2003classical}
Walter Greiner.
\newblock \emph{Classical mechanics: systems of particles and Hamiltonian
  dynamics}.
\newblock Springer, 2003.

\bibitem[Hackbusch(1994)]{hackbusch1994iterative}
Wolfgang Hackbusch.
\newblock \emph{Iterative solution of large sparse systems of equations},
  volume~95.
\newblock Springer, 1994.

\bibitem[Hoffman and Gelman(2014)]{HG14}
Matthew~D Hoffman and Andrew Gelman.
\newblock The {No-U-Turn} sampler: Adaptively setting path lengths in
  {Hamiltonian Monte Carlo}.
\newblock \emph{Journal of Machine Learning Research}, 15:\penalty0 1351--1381,
  2014.

\bibitem[Hu and Lessard(2017)]{HL17}
Bin Hu and Laurent Lessard.
\newblock Dissipativity theory for {Nesterov’s} accelerated method.
\newblock \emph{ICML}, 2017.

\bibitem[Iserles(2009)]{iserles2009first}
Arieh Iserles.
\newblock \emph{A first course in the numerical analysis of differential
  equations}.
\newblock Number~44. Cambridge university press, 2009.

\bibitem[Jiang(2023)]{jiang2022dissipation}
Qijia Jiang.
\newblock {On the Dissipation of Ideal {Hamiltonian Monte Carlo} Sampler}.
\newblock \emph{{STAT}}, 2023.

\bibitem[Kassing and Weissmann(2024)]{kassing2024polyak}
Sebastian Kassing and Simon Weissmann.
\newblock Polyak's {Heavy Ball} method achieves accelerated local rate of
  convergence under {Polyak-Lojasiewicz} inequality.
\newblock \emph{arXiv preprint arXiv:2410.16849}, 2024.

\bibitem[Kelner et~al.(2022)Kelner, Marsden, Sharan, Sidford, Valiant, and
  Yuan]{kelner2022big}
Jonathan Kelner, Annie Marsden, Vatsal Sharan, Aaron Sidford, Gregory Valiant,
  and Honglin Yuan.
\newblock Big-step-little-step: Efficient gradient methods for objectives with
  multiple scales.
\newblock In \emph{Conference on Learning Theory}, pages 2431--2540. PMLR,
  2022.

\bibitem[Kook et~al.(2022)Kook, Lee, Shen, and Vempala]{kook2022sampling}
Yunbum Kook, Yin-Tat Lee, Ruoqi Shen, and Santosh Vempala.
\newblock Sampling with {Riemannian Hamiltonian Monte Carlo} in a constrained
  space.
\newblock \emph{{Advances in Neural Information Processing Systems}},
  35:\penalty0 31684--31696, 2022.

\bibitem[Krichene et~al.(2015)Krichene, Bayen, and
  Bartlett]{krichene2015accelerated}
Walid Krichene, Alexandre Bayen, and Peter~L Bartlett.
\newblock Accelerated mirror descent in continuous and discrete time.
\newblock \emph{{Advances in Neural Information Processing Systems}}, 28, 2015.

\bibitem[Lee et~al.(2021)Lee, Shen, and Tian]{lee2021lower}
Yin~Tat Lee, Ruoqi Shen, and Kevin Tian.
\newblock Lower bounds on {Metropolized} sampling methods for well-conditioned
  distributions.
\newblock \emph{{Advances in Neural Information Processing Systems}},
  34:\penalty0 18812--18824, 2021.

\bibitem[Leventhal and Lewis(2010)]{leventhal2010randomized}
Dennis Leventhal and Adrian~S Lewis.
\newblock Randomized methods for linear constraints: convergence rates and
  conditioning.
\newblock \emph{Mathematics of Operations Research}, 35\penalty0 (3):\penalty0
  641--654, 2010.

\bibitem[Maddison et~al.(2018)Maddison, Paulin, Teh, O'Donoghue, and
  Doucet]{maddison2018hamiltonian}
Chris~J Maddison, Daniel Paulin, Yee~Whye Teh, Brendan O'Donoghue, and Arnaud
  Doucet.
\newblock Hamiltonian descent methods.
\newblock \emph{arXiv preprint arXiv:1809.05042}, 2018.

\bibitem[Monmarch{\'e}(2022)]{monmarche2022hmc}
Pierre Monmarch{\'e}.
\newblock {HMC} and {Langevin} united in the unadjusted and convex case.
\newblock \emph{arXiv preprint arXiv:2202.00977}, 2022.

\bibitem[Muehlebach and Jordan(2019)]{muehlebach2019dynamical}
Michael Muehlebach and Michael Jordan.
\newblock A dynamical systems perspective on {Nesterov} acceleration.
\newblock In \emph{International Conference on Machine Learning}, pages
  4656--4662. PMLR, 2019.

\bibitem[Muehlebach and Jordan(2021)]{muehlebach2021optimization}
Michael Muehlebach and Michael~I Jordan.
\newblock Optimization with momentum: Dynamical, control-theoretic, and
  symplectic perspectives.
\newblock \emph{The Journal of Machine Learning Research}, 22\penalty0
  (1):\penalty0 3407--3456, 2021.

\bibitem[Neal(2011)]{neal2011mcmc}
Radford~M Neal.
\newblock {MCMC} using {Hamiltonian} dynamics.
\newblock \emph{Handbook of markov chain monte carlo}, 2\penalty0
  (11):\penalty0 2, 2011.

\bibitem[Necoara et~al.(2019)Necoara, Nesterov, and Glineur]{necoara2019linear}
Ion Necoara, Yu~Nesterov, and Francois Glineur.
\newblock Linear convergence of first order methods for non-strongly convex
  optimization.
\newblock \emph{Mathematical programming}, 175\penalty0 (1):\penalty0 69--107,
  2019.

\bibitem[Nemirovski(1994)]{Nemirovski84}
Arkadi Nemirovski.
\newblock Information-based complexity of convex programming.
\newblock In \emph{Lecture notes}, 1994.

\bibitem[Nesterov(2012)]{nesterov2012efficiency}
Yu~Nesterov.
\newblock Efficiency of coordinate descent methods on huge-scale optimization
  problems.
\newblock \emph{SIAM Journal on Optimization}, 22\penalty0 (2):\penalty0
  341--362, 2012.

\bibitem[Nesterov(1983)]{nesterov1983method}
Yurii Nesterov.
\newblock A method for unconstrained convex minimization problem with the rate
  of convergence { $O(1/k^{2})$}.
\newblock In \emph{Doklady AN USSR}, volume 269, pages 543--547, 1983.

\bibitem[Nesterov(2018)]{nesterov2018lectures}
Yurii Nesterov.
\newblock \emph{Lectures on convex optimization}, volume 137.
\newblock Springer, 2018.

\bibitem[Noble et~al.(2024)Noble, De~Bortoli, and Durmus]{noble2024unbiased}
Maxence Noble, Valentin De~Bortoli, and Alain Durmus.
\newblock Unbiased constrained sampling with self-concordant barrier
  {Hamiltonian Monte Carlo}.
\newblock \emph{Advances in Neural Information Processing Systems}, 36, 2024.

\bibitem[O'Donoghue and Maddison(2019)]{o2019hamiltonian}
Brendan O'Donoghue and Chris~J Maddison.
\newblock Hamiltonian descent for composite objectives.
\newblock \emph{{Advances in Neural Information Processing Systems}}, 32, 2019.

\bibitem[Okamura et~al.(2026)Okamura, Marumo, and Takeda]{okamura2026heavy}
Kaito Okamura, Naoki Marumo, and Akiko Takeda.
\newblock Heavy-ball differential equation achieves convergence for nonconvex
  functions.
\newblock \emph{Optimization Letters}, pages 1--18, 2026.

\bibitem[Ostrowski(1954)]{ostrowski1954linear}
Alexander~M Ostrowski.
\newblock On the linear iteration procedures for symmetric matrices.
\newblock \emph{Rend. Mat. Appl.}, 14:\penalty0 140--163, 1954.

\bibitem[Paquette et~al.(2021)Paquette, Lee, Pedregosa, and
  Paquette]{paquette2021sgd}
Courtney Paquette, Kiwon Lee, Fabian Pedregosa, and Elliot Paquette.
\newblock {SGD} in the large: Average-case analysis, asymptotics, and stepsize
  criticality.
\newblock In \emph{Conference on Learning Theory}, pages 3548--3626. PMLR,
  2021.

\bibitem[Polyak(1964)]{P64}
B.T. Polyak.
\newblock Some methods of speeding up the convergence of iteration methods.
\newblock \emph{USSR Computational Mathematics and Mathematical Physics},
  4\penalty0 (5):\penalty0 1--17, 1964.

\bibitem[Quarteroni et~al.(2006)Quarteroni, Sacco, and
  Saleri]{quarteroni2006numerical}
Alfio Quarteroni, Riccardo Sacco, and Fausto Saleri.
\newblock \emph{Numerical mathematics}, volume~37.
\newblock Springer Science \& Business Media, 2006.

\bibitem[Richt{\'a}rik and Tak{\'a}{\v{c}}(2016)]{richtarik2016parallel}
Peter Richt{\'a}rik and Martin Tak{\'a}{\v{c}}.
\newblock Parallel coordinate descent methods for big data optimization.
\newblock \emph{Mathematical Programming}, 156:\penalty0 433--484, 2016.

\bibitem[Salvatier et~al.(2016)Salvatier, Wiecki, and
  Fonnesbeck]{salvatier2016probabilistic}
John Salvatier, Thomas~V Wiecki, and Christopher Fonnesbeck.
\newblock Probabilistic programming in python using pymc3.
\newblock \emph{PeerJ Computer Science}, 2:\penalty0 e55, 2016.

\bibitem[Sanz~Serna and Zygalakis(2021)]{sanz2021connections}
Jes{\'u}s~Mar{\'\i}a Sanz~Serna and Konstantinos~C Zygalakis.
\newblock The connections between {Lyapunov} functions for some optimization
  algorithms and differential equations.
\newblock \emph{SIAM Journal on Numerical Analysis}, 59\penalty0 (3):\penalty0
  1542--1565, 2021.

\bibitem[Scieur and Pedregosa(2020)]{SP20}
Damien Scieur and Fabian Pedregosa.
\newblock Universal average-case optimality of {Polyak} momentum.
\newblock \emph{ICML}, 2020.

\bibitem[Scieur et~al.(2017)Scieur, Roulet, Bach, and d'Aspremont]{SRBA17}
Damien Scieur, Vincent Roulet, Francis Bach, and Alexandre d'Aspremont.
\newblock Integration methods and optimization algorithms.
\newblock \emph{Advances in Neural Information Processing Systems},
  30:\penalty0 1109--1118, 2017.

\bibitem[Shi et~al.(2022)Shi, Du, Jordan, and Su]{shi2022understanding}
Bin Shi, Simon~S Du, Michael~I Jordan, and Weijie~J Su.
\newblock Understanding the acceleration phenomenon via high-resolution
  differential equations.
\newblock \emph{Mathematical Programming}, 195\penalty0 (1):\penalty0 79--148,
  2022.

\bibitem[Su et~al.(2014)Su, Boyd, and
  Cand{\`e}s]{suBoydCandes2014_differential}
Weijie Su, Stephen Boyd, and Emmanuel~J. Cand{\`e}s.
\newblock A differential equation for modeling {{Nesterov}}'s accelerated
  gradient method: {{Theory}} and insights.
\newblock \emph{Neural Information Processing Systems}, 2014.

\bibitem[Suh et~al.(2022)Suh, Roh, and Ryu]{SuhRohRyu2022_continuoustime}
Jaewook~J. Suh, Gyumin Roh, and Ernest~K. Ryu.
\newblock Continuous-time analysis of {{AGM}} via conservation laws in dilated
  coordinate systems.
\newblock \emph{International Conference on Machine Learning}, 2022.

\bibitem[Teel et~al.(2019)Teel, Poveda, and Le]{teel2019first}
Andrew~R Teel, Jorge~I Poveda, and Justin Le.
\newblock First-order optimization algorithms with resets and {Hamiltonian}
  flows.
\newblock In \emph{2019 IEEE 58th Conference on Decision and Control (CDC)},
  pages 5838--5843. IEEE, 2019.

\bibitem[Varga(2000)]{varga1962iterative}
Richard~S Varga.
\newblock Matrix iterative analysis.
\newblock \emph{Springer}, 2000.

\bibitem[Ver~Steeg and Galstyan(2021)]{SG21}
Greg Ver~Steeg and Aram Galstyan.
\newblock Hamiltonian dynamics with non-newtonian momentum for rapid sampling.
\newblock \emph{{Advances in Neural Information Processing Systems}},
  34:\penalty0 11012--11025, 2021.

\bibitem[Vishnoi et~al.(2013)]{vishnoi2013lx}
Nisheeth~K Vishnoi et~al.
\newblock Lx= b.
\newblock \emph{Foundations and Trends{\textregistered} in Theoretical Computer
  Science}, 8\penalty0 (1--2):\penalty0 1--141, 2013.

\bibitem[Wang and Abernethy(2018)]{wang2018acceleration}
Jun-Kun Wang and Jacob~D Abernethy.
\newblock Acceleration through optimistic no-regret dynamics.
\newblock \emph{Advances in Neural Information Processing Systems}, 31, 2018.

\bibitem[Wang and Wibisono(2023{\natexlab{a}})]{WW23a}
Jun-Kun Wang and Andre Wibisono.
\newblock Continuized acceleration for quasar convex functions in non-convex
  optimization.
\newblock \emph{International Conference on Learning Representations (ICLR)},
  2023{\natexlab{a}}.

\bibitem[Wang and Wibisono(2023{\natexlab{b}})]{wang2022accelerating}
Jun-Kun Wang and Andre Wibisono.
\newblock Accelerating {Hamiltonian Monte Carlo} via {Chebyshev} integration
  time.
\newblock \emph{International Conference on Learning Representations (ICLR)},
  2023{\natexlab{b}}.

\bibitem[Wibisono et~al.(2016)Wibisono, Wilson, and
  Jordan]{wibisono2016variational}
Andre Wibisono, Ashia~C Wilson, and Michael~I Jordan.
\newblock A variational perspective on accelerated methods in optimization.
\newblock \emph{Proceedings of the National Academy of Sciences}, 113\penalty0
  (47):\penalty0 E7351--E7358, 2016.

\bibitem[Wilson et~al.(2021{\natexlab{a}})Wilson, Jordan, and Recht]{WJR21}
Ashia~C. Wilson, Michael Jordan, and Benjamin Recht.
\newblock A {Lyapunov} analysis of momentum methods in optimization.
\newblock \emph{JMLR}, 2021{\natexlab{a}}.

\bibitem[Wilson et~al.(2021{\natexlab{b}})Wilson, Recht, and
  Jordan]{wilson2016lyapunov}
Ashia~C Wilson, Ben Recht, and Michael~I Jordan.
\newblock A {Lyapunov} analysis of accelerated methods in optimization.
\newblock \emph{Journal of Machine Learning Research}, 22\penalty0
  (113):\penalty0 1--34, 2021{\natexlab{b}}.

\bibitem[Wright(2015)]{wright2015coordinate}
Stephen~J Wright.
\newblock Coordinate descent algorithms.
\newblock \emph{Mathematical programming}, 151\penalty0 (1):\penalty0 3--34,
  2015.

\bibitem[Young(1954)]{young1954iterative}
David Young.
\newblock Iterative methods for solving partial difference equations of
  elliptic type.
\newblock \emph{Transactions of the American Mathematical Society}, 76\penalty0
  (1):\penalty0 92--111, 1954.

\bibitem[Zhang et~al.(2018)Zhang, Mokhtari, Sra, and
  Jadbabaie]{zhang2018direct}
Jingzhao Zhang, Aryan Mokhtari, Suvrit Sra, and Ali Jadbabaie.
\newblock Direct {Runge-Kutta} discretization achieves acceleration.
\newblock \emph{Advances in neural information processing systems}, 31, 2018.

\bibitem[Zhang et~al.(2021)Zhang, Orvieto, and
  Daneshmand]{ZhangOrvietoDaneshmand2021_rethinking}
Peiyuan Zhang, Antonio Orvieto, and Hadi Daneshmand.
\newblock Rethinking the variational interpretation of accelerated optimization
  methods.
\newblock \emph{Neural Information Processing Systems}, 2021.

\end{thebibliography}



\end{document}